\documentclass[a4paper,11pt, reqno]{amsart}

\usepackage[longnamesfirst,semicolon]{natbib}
\usepackage{geometry}
\usepackage{float}
\usepackage{hyperref}
\usepackage[justification=centering]{caption}
\usepackage{subcaption}
\usepackage{amsthm}
\usepackage{amsmath,latexsym}
\usepackage{amssymb}

\usepackage[ruled]{algorithm2e}

\newcommand{\dsum}{\displaystyle\sum}
\newcommand{\dmin}{\displaystyle\min}
\newcommand{\dmax}{\displaystyle\max}

\def\R{\mathbb{R}}
\def\k{\kappa}
\def\sign{\mathrm{sign}}
\def\Z{\mathbb{Z}}
\def\bbeta{\boldsymbol\beta}
\def\d{\mathrm{D}}

\def\omf{{\rm OM}_\lambda}

\usepackage{multirow}

\usepackage{hyperref}

\newtheorem{theo}{Theorem}[section]
\newtheorem{ex}[theo]{Example}
\newtheorem{remark}[theo]{Remark}
\newtheorem{lemma}[theo]{Lemma}

\usepackage{pgfplots}
\pgfplotsset{compat=newest}
\usetikzlibrary{decorations.markings}

\usepgfplotslibrary{patchplots}

\let\origmaketitle\maketitle
\def\maketitle{
  \begingroup
  \def\uppercasenonmath##1{} 
  \let\MakeUppercase\relax 
  \origmaketitle
  \endgroup
}

\begin{document}

\title[Multisource Hyperplanes Location]{\large On the multisource hyperplanes location problem\\to fitting set of points}

\author[V. Blanco, A. Jap\'on, D. Ponce \MakeLowercase{and} J. Puerto]{{\large V\'ictor Blanco$^\dagger$, Alberto Jap\'on$^\ddagger$, Diego Ponce$^\ddagger$  and Justo Puerto$^\ddagger$}\medskip\\
$^\dagger$IEMath-GR, Universidad de Granada\\
$^\ddagger$IMUS, Universidad de Sevilla}

\address{IEMath-GR, Universidad de Granada}
\email{vblanco@ugr.es}

\address{IMUS, Universidad de Sevilla}
\email{ajapon1@us.es}
\email{dponce@us.es}
\email{puerto@us.es}

\begin{abstract}
In this paper we study the problem of locating a given number of hyperplanes minimizing an objective function of the closest distances from a set of points. We propose a general framework for the problem in which norm-based distances between  points and  hyperplanes are aggregated by means of ordered median functions.  A compact Mixed Integer Linear (or Non Linear) programming formulation is presented for the problem and also an extended set partitioning formulation with an exponential number of variables is derived.  We develop a column generation procedure embedded within a branch-and-price algorithm for solving the problem by adequately performing its preprocessing,  pricing and  branching. We also analyze geometrically the optimal solutions of the problem, deriving properties which are exploited to generate initial solutions for the proposed algorithms. Finally, the results of an extensive computational experience are reported. The issue of scalability is also addressed showing theoretical upper bounds on the errors assumed by replacing the original datasets by aggregated versions.
\end{abstract}

\keywords{
Hyperplanes Location; Mixed Integer Non Linear Programming; Column Generation.
}
\subjclass[2010]{52C35; 90B85; 90C11; 90C30}

\maketitle

\section{Introduction}

Location Analysis deals with the determination of the \textit{optimal} positions of facilities to satisfy the demand of a set of customers. The problems analyzed in the field are diverse  but can be usually classified as:  Discrete Location problems (DLP)  and Continuous Location problems (CLP). In the first family, a set of potential facilities is previously given and the goal is to select, among them, the optimal ones under one or more criteria. The main tools for solving these problems come from Discrete Optimization, or more precisely, from Integer Linear Programming. In the second family of problems, the facilities have to be located in a continuous space and then, convex analysis and global optimization tools are needed to solve the problems. The most popular problem in the latter family  is the Weber problem~\citep{Weber} in which a single point-facility has to be positioned on the plane  so as to minimize the overall sum of the (Euclidean) distances to a set of (planar) demand points. The applications of both types of location problems are vast. DLP are more common in the location of \textit{physical} facilities (as ATMs, supermarkets, stations, etc), while CLP are more useful when locating facilities in telecommunication networks (as wifi routers, servers, etc) or even to provide the sets of potential facilities for a DLP.

In this paper we study a problem that falls into the family of CLP. More specifically,  we focus on the determination of \textit{optimal} hyperplanes fitting a given finite set of demand points.  The location of a single hyperplane is a classical problem that has been addressed in different fields.  On the one hand, this problem clearly extends the classical Weber problem, but where instead of locating zero-dimensional facilities one looks for locating higher dimensional structures. On the other hand, in Statistics and Data Analysis, the determination of a hyperplane  minimizing the sum of squares of vertical residuals is key for estimating a multivariate  linear regression model using the Least Sum of Squares (LSS) method, credited to \cite{Gauss}.  One can also find  recent useful applications, both in Location Science and Data Analysis, for the problem of finding \textit{optimal} hyperplanes fitting a set of points.  For instance, \cite{ER11} deals with the location of a rapid transit line on the plane to be used as an alternative transportation mean. Analogously, the widely used Support Vector Machine (SVM) methodology due to \cite{Vapnik}, is also based on constructing a hyperplane minimizing certain loss functions of the distances  to a given set of points.

Scanning the literature one can find that most of the attention has been devoted to finding hyperplanes with any of the following assumptions (see e.g.,
\citep{MS98,Schobel99,Schobel03,Schobel15,MS01,PC01,BJS02,BJS03,BPS18}):
(a) the problem is embedded on the plane; (b) a single hyperplane has to be located; (c) the vertical distance between each point and the hyperplane is considered;  or (d) the residuals are aggregated by the sum or the maximum operators. Our goal here is to study a  generalization of this problem in which, we construct simultaneously a given number, $p$, of hyperplanes in any finite dimensional space, $\R^d$, by minimizing a rather general globalizing function, an ordered median function, of the residuals from the points to the fitting bodies.  Ordered median functions aggregate the set of distances from the demand points to their closest hyperplanes (residuals) by means of a sorting, weighting averaging operation:  distances  are sorted and then their weighted sum is performed. The sum and  maximum functions can be easily represented as ordered median functions with adequate choices of the weights inducing the median and center objective functions. Also, the $k$-centrum (sum of the $k$-th largest distances) or the cent-dian (convex combination of the sum and the max criteria can be cast within this family of functions. In addition, different point-to-hyperplane norm-based distances are considered as a measure of the residuals of the fitting. Thus, this paper naturally extends the analysis performed in ~\cite{BPS18} where the location of a single ordered median hyperplane was studied.

As in the classical Weber problem~\citep{Weber}, the extension from the location of one to several facilities (the so-called multisource problem) is not trivial~\citep{BPE16}. Actually, while the classical single-facility point location problem  with standard distances ($\ell_\tau$, polyhedral, etc) can be formulated as a  Second Order Cone programming problem~\citep{BPE14} (being then polynomially solvable), its multisource version becomes a non-convex NP-hard problem~\citep{BPE16}.

In the case of locating hyperplanes, the situation is even harder, since the location of a single hyperplane is, in general, an NP-hard problem (see \cite{BPS18}) whose exact solution can only be obtained for vertical and polyhedral norm based residuals.

The problem considered in this paper is not fully new although, in our opinion, it has not been fully analyzed and there is room for further improvement. In particular, similar problems have been analyzed from the Data Analysis field, and different names have been adopted. In the so-called \textit{Clusterwise Linear Regression} (CLR) problem, a set of observations is provided and the goal is to cluster them by means of the sum of the squared residuals of several multivariate regression models 
\citep{spath81,H99,CCH14,PJKW17,CLR18}. In \citep{BS07}, classification and regression are simultaneously performed, and also clustering by classical linear regression approaches. Finally, in \citep{BM00}, the clusters are constructed based on the closest distances to \textit{optimal} hyperplanes in a given $d$-dimensional space. In the so-called \textit{Piecewise Linear Regression} problem, a dependent variable is partitioned into $p$ intervals and it adjusts linear bodies to each of them (see \citep{McGee-Carleton70}). However, only local search heuristic algorithms have been proposed for these problems, alternating clustering and regression techniques sequentially. \cite{CCH14} present a column generation algorithm for the (planar) clusterwise regression problem  with  sum of squared residuals which combined with some heuristic strategies outperforms previous results in the literature. Moreover,  \cite{PJKW17} generalized the clusterwise regression problem by allowing each entity to have more than one observation and propose an exact mathematical programming-based approach relying on column generation, and several heuristics.

The main contributions of this paper are:
\begin{enumerate}
\item To provide a general framework for the simultaneous location of several hyperplanes to fit a data set using mathematical programming tools. We formulate the problem by using general norm-based error measures of the distance from points to hyperplanes and ordered median functions to aggregate the residuals. This approach generalizes both the standard multisource regression~\citep{CCH14,PJKW17} and also the more recent proposal for the $p=1$ case \citep{BPS18}.
\item To develop two exact solution methods:
\begin{enumerate}
\item One based on a compact formulation, that for vertical residuals (resulting in a Mixed Integer Second Order Cone Optimization  problem) and for polyhedral norm-based residuals  (resulting in a Mixed Integer linear Programming Problem) can be solved by using some of the available off-the-shell solvers.
\item A novel branch-and-price algorithm, based on a set partitioning formulation for the problem, combining several features as preprocessing, exact and heuristic pricing, and Ryan-and-Foster branching.
\end{enumerate}
\item To  prove some geometrical characterizations of ordered median optimal hyperplanes that are incorporated in the preprocessing phase of our column generation approach.
\item To compare the proposed approaches on a extensive battery of computational experiments on both real and synthetic instances.
\item To  derive upper bounds on the error assumed by aggregation procedures on original datasets that allow to control the scalability of the proposed approaches.
\end{enumerate}

The rest of the paper is organized as follows. In Section \ref{sec:2} we introduce the problem and fix the notation for the rest of the sections. This section also contains two illustrative examples taken from the literature. Section \ref{sec:3} is devoted to a first compact formulation for the problem. This formulation has a polynomial number of variables and constraints but its performance is not always good since it has a large integrality gap. For that reason, in Section \ref{sec:4} we develop an alternative formulation with an exponential number of variables that is solved (exactly, for vertical and polyhedral-norm based residuals) within a branch-and-price (B\&P) algorithm using column generation at each node of the branching tree. This section describes all the elements of this B\&P: initialization, pricing (exact and heuristic) and branching. Section \ref{sec:comp_result} reports our computational results based on two different datasets: the classical 50 points dataset by \cite{EWC74} and another synthetic dataset randomly generated. Section \ref{sec:6} is devoted to explore scalability issues and finally Section \ref{sec:7} draws some conclusions and future extensions.

\section{Multisource Location of Hyperplanes}
\label{sec:2}

In this section we describe the problem under study and fix the notation for the rest of the paper.

We are given a set of $n$ observations/demand points (denoted as points from now on) in $\R^d$, $\{x_1, \ldots, x_n\} \subset \R^{d}$ and $p\in \Z_+$ ($p>0$). Our goal is to find $p$ hyperplanes in $\R^d$ that minimize an objective function of the closest distances from points to hyperplanes.  We denote the index sets of demand points and hyperplanes by $I=\{1, \ldots, n\}$ and $J=\{1, \ldots, p\}$, respectively. Given $\bbeta\in\R^d$ and $\alpha\in \R$, we denote by
$$
\mathcal{H}(\bbeta,\alpha) = \{y \in \R^d: \bbeta^t y + \alpha=0\},
$$
the hyperplane in $\R^d$ with coefficients $\bbeta$ and intercept $\alpha$ (here $v^t$ stands for the transpose of the vector $v\in \R^d$).

Several elements are involved when finding the \textit{best} $p$ hyperplanes to fit a set of demand points. In what follows we describe them:
\begin{itemize}
\item \textit{Residuals:} The point-to-hyperplane measure of closeness. Given a demand point $x =$ $(x_1$, $\ldots$, $x_d)\in \R^d$ and a hyperplane $\mathcal{H}(\bbeta,\alpha)$, how far/close is the point from the hyperplane? The classical fitting methods use the so-called \textit{vertical-distance} measure, which given a reference coordinate, say   the $d$-th, computes the deviation $x_d + \frac{\alpha}{\beta_d}  + \sum_{\ell=1}^{d-1} \frac{\beta_\ell}{\beta_d} x_{\ell}$ for all $i\in I$, whenever $\beta_d\neq 0$. However, it has been already proposed that the use of more general distance measures based on norms may be advisable. In particular, some authors (see e.g., \cite{BPS18,BPR19}) have shown the usefulness of norm-based distances, such as polyhedral, or $\ell_\tau$-distances ($\tau\geq1$). Among them, we mention, for their importance, the Manhattan ($\ell_1$-norm), the Tchebyshev ($\ell_\infty$-norm) or the Euclidean ($\ell_2$-norm) distances.

Thus, for a point $x\in \R^d$ and a hyperplane $\mathcal{H}(\bbeta,\alpha)$, we consider the residual from  $x$ to $\mathcal{H}(\bbeta,\alpha)$ as:
$$
\varepsilon_x(\bbeta,\alpha) = \d\Big(x, \mathcal{H}(\bbeta,\alpha)\Big) := \min \{\d(x,y): y \in \mathcal{H}(\bbeta,\alpha)\},
$$
where $\d$ is a norm-based distance or the vertical distance in $\R^d$ (see \cite{mangasarian,BPS18} for further details on this projection).

\item \textit{Allocation Rule}:  Given a set of hyperplanes and a point,  once the residuals to each of the hyperplanes are calculated,  one has to allocate the point to a single hyperplane. Different alternatives can be considered, as the allocation to the closest or the  furthest hyperplane. In our framework we assume, as usual in Location Analysis, that each point is allocated to the hyperplane with the smallest residual, i.e., for a point $x\in \R^d$ and an arrangement of hyperplanes  $\mathbb{H} = \Big\{\mathcal{H}(\bbeta_j,\alpha_j): j \in J\Big\}$, the final residual point-to-hyperplanes is computed as:
$$
\varepsilon_x\Big(\mathbb{H}\Big) = \min_{j\in J} \varepsilon_x(\bbeta_j,\alpha_j),
$$
and the hyperplane, $\mathcal{H}(\bbeta_j,\alpha_j)$, reaching such a minimum is the one where $x$ is allocated (in case of ties among more than one hyperplane, a random assignment is performed).

\item \textit{Aggregation of Residuals}:  Given a set of points and an arrangement of hyperplanes, once the residuals are computed with respect to the arrangement, and  in order to find the $p$ hyperplanes that best fit the $n$ data points, a global measure of goodness must be chosen for aggregating the residuals.  The classical aggregation functions are the sum or maximum of squared residuals. Most of these criteria can be cast within the framework of the family of ordered median aggregation criteria. More explicitly, given $x_1, \ldots, x_n\in \R^d$, an arrangement of hyperplanes $\mathbb{H} = \Big\{\mathcal{H}(\bbeta_j,\alpha_j): j \in J\Big\}$, and $\lambda \in \R_+^n$ (with $\lambda_1\geq \cdots \geq \lambda_n\geq 0$) the $\lambda$-ordered median function is defined as:
\begin{equation}\label{omf}\tag{OMF}
   \textsf{OM}_\lambda(\varepsilon_1,\ldots,\varepsilon_n)=  \dsum_{i=1}^n \lambda_i \;e_{(i)},
\end{equation}
where $e_{(1)}, \ldots, e_{(n)}$ are defined such that $e_{(i)} \in \{\varepsilon_{x_1}(\mathbb{H}), \ldots, \varepsilon_{x_n}(\mathbb{H})\}$ for all $i\in I$ and $e_{(1)}\geq \cdots \geq e_{(n)}$. Observe that particular cases of ordered median problem are the sum ($\lambda_i=1$, $i=1, \ldots, n$), the maximum ($\lambda_1=1$, $\lambda_i=0$, $i\neq 1$), the $k$-centrum ($\lambda_i=1$, $i=1,\ldots, k$, $\lambda_j=0$, $j>k$) or the $\rho$-centdian, a convex combination of sum and max criterion ($\lambda_1=1, \lambda_i=\rho, i=2, \ldots, n)$, for $0<\rho<1$.
\end{itemize}

Summarizing all the above considerations,  the Multisource Ordered Median Fitting Hyperplanes Problem (MOMFHP) can be stated as the problem of finding $\bbeta_1, \ldots, \bbeta_p \in \R^{d}$ and $\alpha_1, \ldots, \alpha_p\in \R$ solving the following optimization problem:
\begin{align}
\min & \dsum_{i\in I} \lambda_i \;e_{(i)} \tag{${\rm MOMFHP}_0$}\label{mofhp0}\\
\mbox{s.t. } & e_i \geq \min_{j\in J} \varepsilon_{x_i}(\bbeta_j,\alpha_j), \forall i \in I,\nonumber\\
& \bbeta_j \in \R^d, \alpha_j \in \R, \forall j\in J,\nonumber\\
& e_i \geq 0, \forall i\in I.\nonumber
\end{align}
where $e_i$ represents the residual for the $i$-th point in the data set, for all $i\in I$.

(MOMFHP) appears when different trends or clouds have to be differentiated on the demand points, and then, different hyperplanes want to be use to fitting to the points, such that the global error assumed, when the points are allocated to their closest hyperplanes, is as small as possible. In Figure \ref{fig:0} we illustrate a set of demand points in the plane which could be clustered into three groups according to different linear trends which are drawn in gray color.
\begin{figure}[h]
\begin{center}
\fbox{
\begin{tikzpicture}[scale=1.25]

\definecolor{color0}{rgb}{0.12156862745098,0.466666666666667,0.705882352941177}
\definecolor{color1}{rgb}{1,0.498039215686275,0.0549019607843137}
\definecolor{color2}{rgb}{0.172549019607843,0.627450980392157,0.172549019607843}

\begin{axis}[
hide x axis,
hide y axis,
]
\addplot [only marks, draw=black, fill=black, mark size=1, colormap/viridis]
table{%
x                      y
8.14 9.26
4.87 13.6
1.2 26.79
1.8 24.19
7.48 8.82
3.77 19.14
3.71 15.05
3.49 17.5
1.79 24.88
8.74 2.81
2.61 20.86
6.44 6.71
4.94 14.7
3.46 20.93
6.37 10.52
6.01 12.75
9.26 2.17
9.44 1.18
2.57 23.48
4.79 11.85
9.68 1.14
6.84 8.41
0.79 27.18
5.15 11.71
0.96 26.03
1.86 19.17
7.73 7.75
8.11 4.01
6.14 10.66
7.05 9.36
2.01 24.44
2.2 21.26
9.76 2.68
1.28 24.15
9.8 1.87
0.74 25.04
5.14 12.55
0.64 21.36
5.06 11.84
7.48 5.39
0.44 25.28
2.94 17.68
1.79 22.24
8.19 6.31
9.39 3.87
4.18 16.96
7.46 5.28
6.18 11.35
4.36 16.37
5.14 11.59
6.79 -39.23
3.79 -19.46
5.58 -31.49
9.33 -61.91
1.3 2.09
7.96 -48.14
2.28 -10.98
4.31 -27.02
2.35 -9.05
0.64 3.19
0.35 5.14
1.54 -1.85
6.81 -41.02
2.61 -13.25
5.81 -37.34
0.08 8.07
7.56 -42.95
5.13 -27.31
3.01 -11.78
9.1 -58.83
3.87 -19.65
4.58 -24.65
4.31 -25.93
5.43 -28.07
0.8 4.4
3.79 -21.27
4.98 -28.81
2.09 -5.63
5.2 -25.96
7.6 -47.53
4.58 -29.88
5.05 -26.66
5.01 -28.81
8.25 -49.8
1.87 -2.78
4.93 -28.52
2.3 -8.36
4.64 -22.92
6.24 -38.15
6.76 -39.43
7.47 -45.49
1.16 -1.55
1.55 -1.19
4.72 -24
4.97 -29.9
9.23 -62.74
1.51 0.64
2.49 -4.72
6.86 -40.08
4.12 -21.63
6.85 11.03
9.21 17.05
3.78 10.04
2.03 -0.59
5.94 9.26
5.39 12.62
3.03 5.27
0.42 -0.92
5.15 7.23
1.54 -0.7
6.35 7.01
7.37 14.16
8.2 14.17
2.36 3.5
3.7 8.7
5.19 6.79
7.84 14.03
2.31 1.75
4.91 6.27
7.94 14.93
2.94 3.26
4.99 9.36
0.38 -0.57
9.23 15.31
1.47 1.91
4.17 5.85
5.35 9.2
1.57 2.05
5.3 8.5
8.85 19.75
3.95 6.13
6.67 10.26
6.99 12.75
9.41 19.3
7.09 12.54
7.33 11.95
3 5.13
7.9 14.68
1.41 4.64
3.2 5.47
3.07 1.32
3.65 5.64
6.81 14.76
7 12.21
0.61 -0.29
4.97 7.36
9.3 15.05
0.76 1.14
2.57 3.01
1.04 -1.44
};
\addplot [semithick, gray]
table {%
0 27.79
10 -0.01
};
\addplot [semithick, gray]
table {%
0 10.37
10 -65.73
};
\addplot [semithick, gray]
table {%
0 -2.37
10 19.13
};
\end{axis}

\end{tikzpicture}}
\end{center}
\caption{Illustration of a feasible solution of our problem for a set of demand points.\label{fig:0}}
\end{figure}
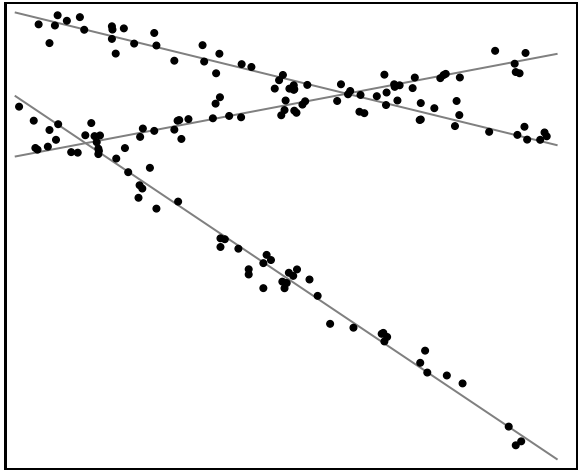
In the following example we illustrate the problem under analysis in two classical instances.

\begin{ex}
In the seminal paper by \cite{McGee-Carleton70}, the authors illustrate the Clusterwise Linear Regression method with two instances. The first instance, \citep{Quandt}, consists of $20$ points on the plane, $\{x_1, \ldots, x_{20}\}$ generated as follows:
\begin{align*}
x_{i2} = 2.5 + 0.7 x_{i1} + \epsilon_i, \mbox{\rm for $i=1, \ldots, 12$, and}\\
x_{i2} = 5 + 0.5 x_{i1} + \epsilon_i, \mbox{\rm for $i=13, \ldots, 20$,}
\end{align*}
where $\epsilon$ is randomly generated as a univariate normal distribution with mean $0$ and standard deviation $1$.

We run our model with this dataset choosing as residuals the $\ell_1$-norm projection of the data onto the hyperplanes, and four different ordered median criteria:  Weber, Center, $\lceil \frac{n}{2}\rceil$-Centrum ($\lambda=(\overbrace{1, \ldots, 1}^{\lceil \frac{n}{2}\rceil}, 0, \ldots, 0)$) and $0.9$-centdian ($\lambda=(1, 0.9, \ldots, 0.9)$). The results are shown in Figure \ref{fig:q1}.
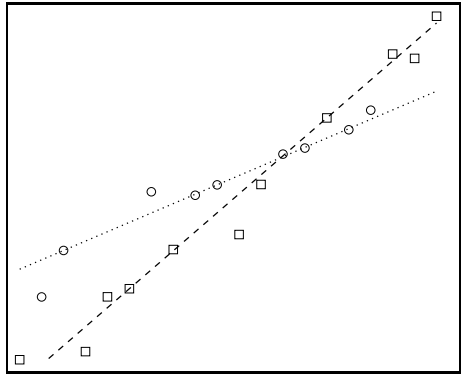
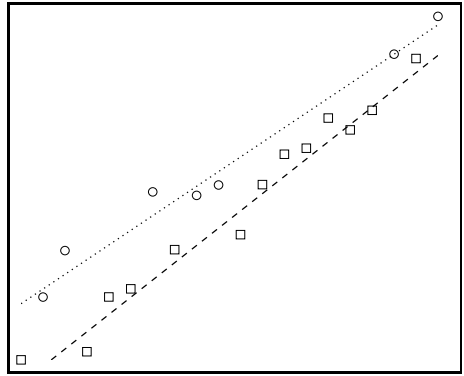
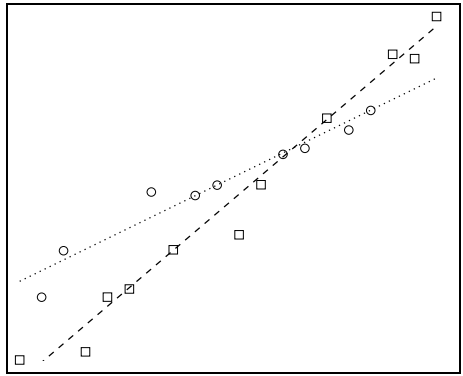
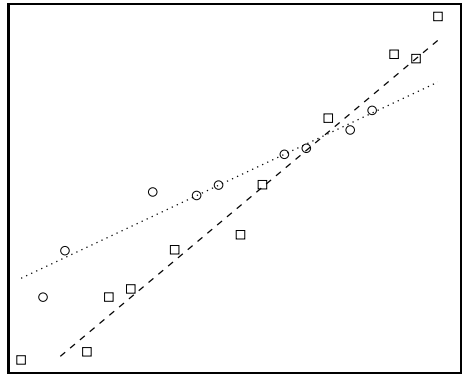
\begin{figure}[ht!]
	\centering
	\begin{subfigure}{0.45\textwidth} 
		\fbox{
\begin{tikzpicture}[scale=0.8]

\begin{axis}[
hide x axis,
hide y axis,
tick align=outside,
tick pos=left,
x grid style={white!69.01960784313725!black},
xmin=1, xmax=20,
xtick style={color=black},
y grid style={white!69.01960784313725!black},
ymin=3.14, ymax=17.197,
ytick style={color=black}
]
\addplot [only marks, draw=black, fill=black, colormap/viridis,mark=square]
table{%
x                      y
4 3.473
5 5.714
6 6.046
8 7.65
1 3.14
12 10.312
20 17.197
15 13.036
11 8.264
19 15.472
18 15.65
};
\addplot [only marks, draw=black, fill=black, colormap/viridis, mark=o]
table{%
x                      y
13 11.555
2 5.71
17 13.353
3 7.612
14 11.802
16 12.551
10 10.296
7 10.014
9 9.871
};
\addplot [semithick, black, dashed]
table {%
1 2.16266666666667
20 16.9193333333333
};
\addplot [semithick, black, dotted]
table {%
1 6.84514285714286
20 14.1302857142857
};
\end{axis}

\end{tikzpicture}}
		\caption{Solution for Weber criterion.} 
	\end{subfigure}
	\vspace{0.2em} 
	\begin{subfigure}{0.45\textwidth} 
		\fbox{
\begin{tikzpicture}[scale=0.8]

\begin{axis}[
hide x axis,
hide y axis,
tick align=outside,
tick pos=left,
x grid style={white!69.01960784313725!black},
xmin=1, xmax=20,
xtick style={color=black},
y grid style={white!69.01960784313725!black},
ymin=3.14, ymax=17.197,
ytick style={color=black}
]
\addplot [only marks, draw=black, fill=black, colormap/viridis,mark=square]
table{%
x                      y
4 3.473
13 11.555
5 5.714
6 6.046
8 7.65
1 3.14
12 10.312
17 13.353
15 13.036
11 8.264
14 11.802
16 12.551
19 15.472
};
\addplot [only marks, draw=black, fill=black, colormap/viridis, mark=o]
table{%
x                      y
2 5.71
20 17.197
3 7.612
10 10.296
7 10.014
18 15.65
9 9.871
};
\addplot [semithick, black, dashed]
table {%
1 2.16771428571429
20 15.598
};
\addplot [semithick, black, dotted]
table {%
1 5.43871428571429
20 16.8482142857143
};
\end{axis}

\end{tikzpicture}}
		\caption{Solution for Center criterion.} 
	\end{subfigure}
		\begin{subfigure}{0.45\textwidth} 
		\fbox{
\begin{tikzpicture}[scale=0.8]

\begin{axis}[
hide x axis,
hide y axis,
tick align=outside,
tick pos=left,
x grid style={white!69.01960784313725!black},
xmin=1, xmax=20,
xtick style={color=black},
y grid style={white!69.01960784313725!black},
ymin=3.14, ymax=17.197,
ytick style={color=black}
]
\addplot [only marks, draw=black, fill=black, colormap/viridis,mark=square]
table{%
x                      y
4 3.473
5 5.714
6 6.046
8 7.65
1 3.14
12 10.312
20 17.197
15 13.036
11 8.264
19 15.472
18 15.65
};
\addplot [only marks, draw=black, fill=black, colormap/viridis, mark=o]
table{%
x                      y
13 11.555
2 5.71
17 13.353
3 7.612
14 11.802
16 12.551
10 10.296
7 10.014
9 9.871
};
\addplot [semithick, black, dashed]
table {%
1 2.28069230769231
20 16.8025384615385
};
\addplot [semithick, black, dotted]
table {%
1 6.36036094674556
20 14.678201183432
};
\end{axis}

\end{tikzpicture}}
		\caption{Solution for $k$-Centrum criterion.} 
	\end{subfigure}
	\vspace{0.2em} 
	\begin{subfigure}{0.45\textwidth} 
		\fbox{
\begin{tikzpicture}[scale=0.8]

\begin{axis}[
hide x axis,
hide y axis,
tick align=outside,
tick pos=left,
x grid style={white!69.01960784313725!black},
xmin=1, xmax=20,
xtick style={color=black},
y grid style={white!69.01960784313725!black},
ymin=3.14, ymax=17.197,
ytick style={color=black}
]
\addplot [only marks, draw=black, fill=black, colormap/viridis,mark=square]
table{%
x                      y
4 3.473
5 5.714
6 6.046
8 7.65
1 3.14
12 10.312
20 17.197
15 13.036
11 8.264
19 15.472
18 15.65
};
\addplot [only marks, draw=black, fill=black, colormap/viridis, mark=o]
table{%
x                      y
13 11.555
2 5.71
17 13.353
3 7.612
14 11.802
16 12.551
10 10.296
7 10.014
9 9.871
};
\addplot [semithick, black, dashed]
table {%
1 1.94430769230769
20 16.2235384615385
};
\addplot [semithick, black, dotted]
table {%
1 6.48302797202797
20 14.5136503496503
};
\end{axis}

\end{tikzpicture}}
		\caption{Solution for Centdian criterion.} 
	\end{subfigure}
	\caption{Lines obtained for Quandt dataset for $\ell_1$-norm residuals and different criteria.\label{fig:q1}} 
\end{figure}

\cite{McGee-Carleton70} also analyzed a real instance, the \texttt{Boston} dataset. It was motivated by the fact that regional stock exchanges were hurt by the abolition of give-ups  in 1968. The model tries to analyze the dollar volume of sales on the Boston Stock Exchange with respect to dollar volumes for the New York and American Stock Exchanges, based on a dataset with $35$ monthly observations from January 1967 to November 1969.  One can observe, in the results  shown in  (Figure \ref{fig:b1}), that our models are able to adequately cast the trends of these observations.
    \begin{figure}[ht!]
	\centering
	\begin{subfigure}{0.45\textwidth} 
		\fbox{
\begin{tikzpicture}[scale=0.8]

\begin{axis}[
hide x axis,
hide y axis,
tick align=outside,
tick pos=left,
x grid style={white!69.01960784313725!black},
xmin=10234.3, xmax=18464.3,
xtick style={color=black},
y grid style={white!69.01960784313725!black},
ymin=57.4, ymax=271.8,
ytick style={color=black}
]
\addplot [only marks, draw=black, fill=black, colormap/viridis,mark=square]
table{%
x                      y
10581.6 78.8
10234.3 69.1
13299.5 87.6
10746.5 72.8
13310.7 79.4
12835.5 85.6
12194.2 75
12860.4 85.3
11955.6 86.9
13351.5 107.8
13285.9 128.7
13784.4 134.5
16336.7 148.7
11040.5 94.2
16056.4 154.1
18464.3 191.3
17092.2 191.9
15178.8 159.6
17436.9 139.4
16482.2 106
13905.4 112.1
11973.7 103.5
12573.6 92.5
16566.8 116.9
13558.7 78.9
11530.9 57.4
11278 75.9
11263.7 109.8
15649.5 129.2
12197.1 115.1
};
\addplot [only marks, draw=black, fill=black, colormap/viridis, mark=o]
table{%
x                      y
11525.3 128.1
12774.8 185.5
12377.8 178
16856.3 271.8
14635.3 212.3
};
\addplot [semithick, black, dashed]
table {%
10234.3 66.4276995156542
18464.3 168.817455443351
};
\addplot [semithick, black, dotted]
table {%
10234.3 131.783192453755
18464.3 305.799852995222
};
\end{axis}

\end{tikzpicture}}
		\caption{Solution for Weber criterion.} 
	\end{subfigure}
	\vspace{0.5em} 
	\begin{subfigure}{0.45\textwidth} 
		\fbox{
\begin{tikzpicture}[scale=0.8]

\begin{axis}[
hide x axis,
hide y axis,
tick align=outside,
tick pos=left,
x grid style={white!69.01960784313725!black},
xmin=10234.3, xmax=18464.3,
xtick style={color=black},
y grid style={white!69.01960784313725!black},
ymin=57.4, ymax=271.8,
ytick style={color=black}
]
\addplot [only marks, draw=black, fill=black, colormap/viridis, mark=square]
table{%
x                      y
10581.6 78.8
10234.3 69.1
13299.5 87.6
10746.5 72.8
13310.7 79.4
12835.5 85.6
12194.2 75
12860.4 85.3
11955.6 86.9
13351.5 107.8
13285.9 128.7
13784.4 134.5
16336.7 148.7
16056.4 154.1
18464.3 191.3
17092.2 191.9
15178.8 159.6
17436.9 139.4
16482.2 106
13905.4 112.1
11973.7 103.5
12573.6 92.5
16566.8 116.9
13558.7 78.9
11530.9 57.4
11278 75.9
15649.5 129.2
};
\addplot [only marks, draw=black, fill=black, colormap/viridis, mark=o]
table{%
x                      y
11525.3 128.1
12774.8 185.5
12377.8 178
16856.3 271.8
14635.3 212.3
11040.5 94.2
11263.7 109.8
12197.1 115.1
};
\addplot [semithick, black, dashed]
table {%
10234.3 56.646132795745
18464.3 172.573558376941
};
\addplot [semithick, black, dotted]
table {%
10234.3 94.4516539537136
18464.3 266.824993241422
};
\end{axis}

\end{tikzpicture} }
		\caption{Solution for Center criterion.} 
	\end{subfigure}
		\begin{subfigure}{0.45\textwidth} 
		\fbox{
\begin{tikzpicture}[scale=0.8]

\begin{axis}[
hide x axis,
hide y axis,
tick align=outside,
tick pos=left,
x grid style={white!69.01960784313725!black},
xmin=10234.3, xmax=18464.3,
xtick style={color=black},
y grid style={white!69.01960784313725!black},
ymin=57.4, ymax=271.8,
ytick style={color=black}
]
\addplot [only marks, draw=black, fill=black, colormap/viridis,mark=square]
table{%
x                      y
10581.6 78.8
10234.3 69.1
13299.5 87.6
10746.5 72.8
13310.7 79.4
12835.5 85.6
12194.2 75
12860.4 85.3
11955.6 86.9
13351.5 107.8
13285.9 128.7
13784.4 134.5
16336.7 148.7
11040.5 94.2
16056.4 154.1
18464.3 191.3
17092.2 191.9
15178.8 159.6
17436.9 139.4
16482.2 106
13905.4 112.1
11973.7 103.5
12573.6 92.5
16566.8 116.9
13558.7 78.9
11530.9 57.4
11278 75.9
15649.5 129.2
12197.1 115.1
};
\addplot [only marks, draw=black, fill=black, colormap/viridis, mark=o]
table{%
x                      y
11525.3 128.1
12774.8 185.5
12377.8 178
16856.3 271.8
14635.3 212.3
11263.7 109.8
};
\addplot [semithick, black, dashed]
table {%
10234.3 69.2556948902485
18464.3 167.538555393847
};
\addplot [semithick, black, dotted]
table {%
10234.3 108.617038991023
18464.3 330.461158293217
};
\end{axis}

\end{tikzpicture} }
		\caption{Solution for $k$-Centrum criterion.} 
	\end{subfigure}
	\vspace{0.5em} 
	\begin{subfigure}{0.45\textwidth} 
		\fbox{
\begin{tikzpicture}[scale=0.8]

\begin{axis}[
hide x axis,
hide y axis,
tick align=outside,
tick pos=left,
x grid style={white!69.01960784313725!black},
xmin=10234.3, xmax=18464.3,
xtick style={color=black},
y grid style={white!69.01960784313725!black},
ymin=57.4, ymax=271.8,
ytick style={color=black}
]
\addplot [only marks, draw=black, fill=black, colormap/viridis,mark=square]
table{%
x                      y
10581.6 78.8
10234.3 69.1
13299.5 87.6
10746.5 72.8
13310.7 79.4
12835.5 85.6
12194.2 75
12860.4 85.3
11955.6 86.9
13351.5 107.8
13285.9 128.7
13784.4 134.5
16336.7 148.7
11040.5 94.2
16056.4 154.1
18464.3 191.3
17092.2 191.9
15178.8 159.6
17436.9 139.4
16482.2 106
13905.4 112.1
11973.7 103.5
12573.6 92.5
16566.8 116.9
13558.7 78.9
11530.9 57.4
11278 75.9
11263.7 109.8
15649.5 129.2
12197.1 115.1
};
\addplot [only marks, draw=black, fill=black, colormap/viridis, mark=o]
table{%
x                      y
11525.3 128.1
12774.8 185.5
12377.8 178
16856.3 271.8
14635.3 212.3
};
\addplot [semithick, black, dashed]
table {%
10234.3 66.3431274521165
18464.3 170.091782409323
};
\addplot [semithick, black, dotted]
table {%
10234.3 131.783192453755
18464.3 305.799852995222
};
\end{axis}

\end{tikzpicture}}
		\caption{Solution for Centdian criterion.} 
	\end{subfigure}
	\caption{Lines obtained for Boston dataset for $\ell_1$-norm residuals and different criteria.\label{fig:b1}} 
\end{figure}
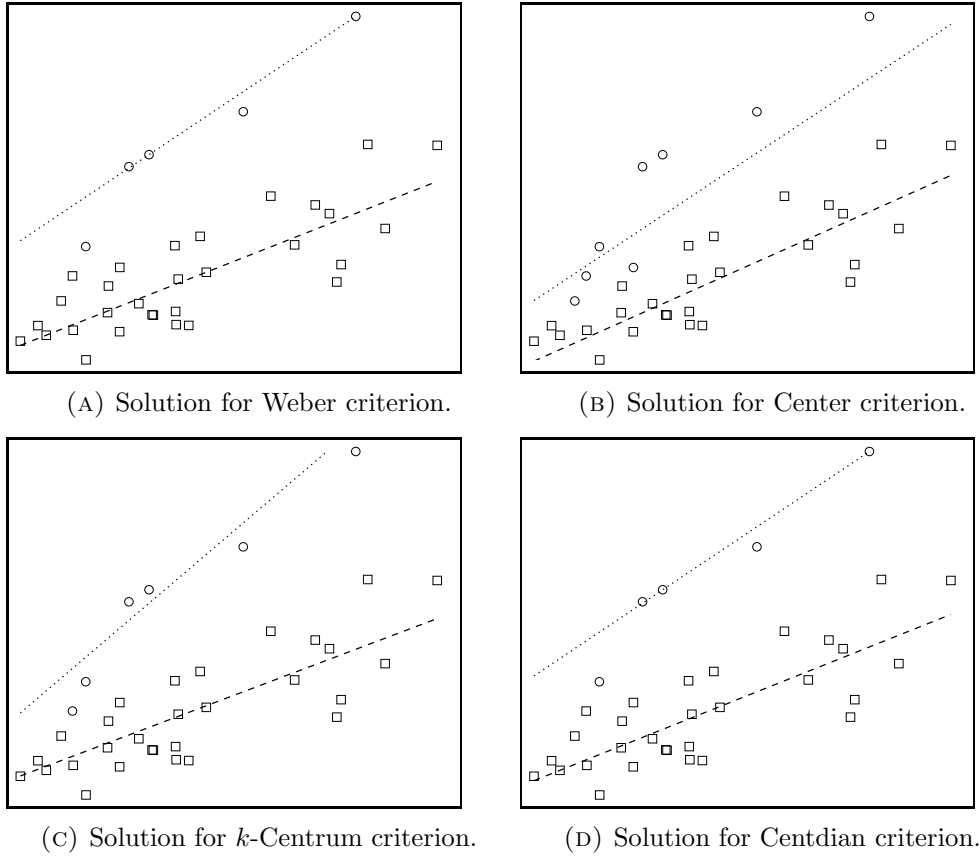

\end{ex}

\section{A Compact Formulation for (MOMFHP$_0$)}\label{sec:3}

In this section we provide a mathematical programming formulation for \eqref{mofhp0}. The main components which involve decisions in this problem, and that have to be adequately included in a suitable formulation, are the representation of general norm-based residuals and the aggregation of residuals using an ordered median function. We describe here how to incorporate all these elements into a mathematical programming formulation which in many cases is suitable to be solved with any of the available MILP/MISOCO solvers.

\begin{theo}
Let $\{x_1, \ldots, x_n\} \subseteq \R^d$, $p \in \Z_+$ ($p>0$) and $ \lambda_1 \ge \cdots \ge \lambda_n\ge 0$. Then, \eqref{mofhp0} can be equivalently reformulated as follows:
\begin{align}
\min &\dsum_{k \in I} u_k +\dsum_{i \in I} v_i \tag{${\rm MOMFHP}$}\label{FHBEP}\\
\mbox{s.t. } & u_k + v_i \geq \lambda_k e_i, \forall i, k\in I,\label{eqq:3}\\
& e_i \geq  \varepsilon_{x_i}(\bbeta_j,\alpha_j)  - M_{ij}(1- z_{ij}), \forall i\in I, j\in J,\label{eqq:1}\\
& \dsum_{j=1}^p z_{ij}=1, \forall i\in I,\label{eqq:4}\\
& z_{ij} \in \{0,1\}, \forall i\in I, j\in J,\label{eqq:5}\\
& e_i \in \R_+, \forall i\in I,\label{eqq:6}\\
& \bbeta_{j} \in \R^d, \alpha_j\in \R, \forall j\in J, \label{eqq:7}\\
& u_k, v_i \in \R, \forall i,k \in I.\label{eqq:8}
\end{align}
where $M_{ij}$ are upper bounds on the residual values $\varepsilon_{x_i}(\bbeta_j,\alpha_j)$, for all $i \in I, j \in J$.
\end{theo}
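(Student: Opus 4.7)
The plan is to establish the equivalence by separately reformulating the two nonlinear components of \eqref{mofhp0}: the pointwise allocation minimum $\min_{j\in J}\varepsilon_{x_i}(\bbeta_j,\alpha_j)$ and the ordered-median aggregation $\sum_i \lambda_i e_{(i)}$ of the sorted residuals.

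First, I would handle the allocation constraint with a standard big-M/assignment modeling. Introducing binary variables $z_{ij}$ with $\sum_{j\in J} z_{ij}=1$ and the inequalities \eqref{eqq:1} ensures that if $z_{ij}=1$ then $e_i \ge \varepsilon_{x_i}(\bbeta_j,\alpha_j)$, while $z_{ij}=0$ makes the corresponding inequality vacuous because $M_{ij}$ is a valid upper bound on the residual. Since the ordered median objective is nondecreasing in each $e_i$ (as $\lambda\ge 0$), at optimality each $e_i$ equals $\varepsilon_{x_i}(\bbeta_{j^\star},\alpha_{j^\star})$ for the chosen $j^\star$; moreover, the optimal choice of $z_{i\cdot}$ picks $j^\star\in \arg\min_{j}\varepsilon_{x_i}(\bbeta_j,\alpha_j)$, recovering exactly the pointwise minimum appearing in \eqref{mofhp0}.

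Second, I would linearize the sorted objective for fixed nonnegative $e\in\R^n_+$. By the rearrangement inequality,
\begin{equation*}
\sum_{i=1}^n \lambda_i e_{(i)} \;=\; \max\Bigl\{ \textstyle\sum_{i,k\in I} \lambda_k\, e_i\, \sigma_{ik} : \sigma \text{ is an } n\times n \text{ permutation matrix}\Bigr\}.
\end{equation*}
By the Birkhoff--von Neumann theorem, the doubly stochastic polytope has permutation matrices as extreme points, so the value is unchanged by the LP relaxation over doubly stochastic $\sigma$. Writing its LP dual with multipliers $u_k$ for the column-sum constraints and $v_i$ for the row-sum constraints yields, by strong duality,
\begin{equation*}
\sum_{i=1}^n \lambda_i e_{(i)} \;=\; \min\Bigl\{ \textstyle\sum_{k\in I} u_k + \sum_{i\in I} v_i : u_k + v_i \ge \lambda_k\, e_i \ \ \forall\, i,k\in I\Bigr\},
\end{equation*}
which is exactly the objective of \eqref{FHBEP} subject to \eqref{eqq:3}. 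Since this reformulation is a minimization that matches the ordered median value for every fixed $e$, nesting it inside the outer minimization over $(\bbeta,\alpha,e,z)$ preserves the overall optimum.

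Combining both pieces yields \eqref{FHBEP}. The main obstacle is the careful justification of the ordered-median linearization: one must argue that (i) the maximum over permutation matrices coincides with the maximum over the Birkhoff polytope, and (ii) LP strong duality applies and produces precisely the constraints \eqref{eqq:3} with free $u,v$. The allocation part is a standard big-M argument whose correctness hinges only on $M_{ij}$ being a valid upper bound on the residuals, and the monotonicity of \om\ in each $e_i$ glues the two reformulations together without loss.
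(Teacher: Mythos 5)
Your proposal is correct and follows essentially the same route as the paper: big-$M$ assignment constraints with $z_{ij}$ to capture the closest-hyperplane allocation, combined with the $u,v$-linearization of the ordered median objective via constraints \eqref{eqq:3}, glued together by the monotonicity of \om\ in the residuals. The only difference is that you prove the ordered-median linearization from scratch (rearrangement inequality, Birkhoff--von Neumann, LP strong duality), whereas the paper simply cites \cite{BPE14} for that identity.
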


\begin{proof}
First, observe that given a set of residuals $e_1, \ldots, e_n \geq 0$, the evaluation of the objective function in \eqref{mofhp0} requires sorting  and averaging them (the residuals) with the $\lambda$-weights. In \cite{BPE14}, the authors proved that the computation of $\dsum_{k=1}^n \lambda_k e_{(k)}$  can be done by means of the optimal value of the following Linear Programming Problem (see \cite{BPE14}):
\begin{equation*} \label{eq:BHP} \sum_{k\in I} \lambda_k e_{(k)} =
\left\{
\begin{array}{rrrl}
\min        &\dsum_{k \in I} u_k +\dsum_{i \in I} v_i  \\
 \mbox{s.t.} & u_k+ v_i\ge \lambda_k e_i & \forall k,i \in I,\\
 & u, v \in \R^n.
\end{array}
\right.
\end{equation*}
Thus, the objective function in \eqref{mofhp0} can be replaced by the above objective function and the constraints incorporated to the rest of constraints in the model.

In order to identify the point-to-hyperplane allocation we consider the following set of binary variables:
$$
z_{ij} = \left\{\begin{array}{cl}
1 & \mbox{ if the $i$-th observation is assigned to $\mathcal{H}(\bbeta_j,\alpha_j)$,}\\
0 & \mbox{otherwise,}
\end{array}\right.
$$
for all $i\in I$ and $j\in J$.

Note that with our allocation rule, an observation can be always assigned to a hyperplane that reaches the minimum residual among all the possible assignments  to the $p$ hyperplanes.

Finally, using the variables previously described, the objective function computes the ordered median function of the residuals. Constraints \eqref{eqq:1} assure the correct definition of the residuals $e_i$ and the allocation to their correct hyperplane. Indeed, if $z_{ij}=1$ this constraint forces $e_i$ to take the value of $ \varepsilon_{x_i}(\bbeta_j,\alpha_j)$. Constraints \eqref{eqq:4} assure that only one of these variables will be equal to 1, which in turns forces by the minimization character of the objective function to be the one with  the correct assignment. Finally, \eqref{eqq:5}--\eqref{eqq:8} are the domains of the variables.
\end{proof}

\begin{remark}
Observe that the different choices of ordered median functions are embedded into constraint \eqref{eqq:3}. In some particular cases, this formulation can be simplified avoiding useless variables and constraints.

\begin{itemize}
\item {\bf $p$-Median Problem} ($\lambda=(1, \ldots, 1)$) In this case, since the ordering does not affect the aggregation operator, the $u$ and $v$-variables can be avoided, and the problem simplifies to:
\begin{align*}
\min &\;\;\; \dsum_{i\in I} e_i \nonumber\\
\mathrm{s.t. } &\;\; \eqref{eqq:1}-\eqref{eqq:7}.
\end{align*}
\item {\bf $p$-Center Problem} ($\lambda=(1,0, \ldots, 0)$): For the Center problem, one can represent the objective function, $\max_{i\in I} e_i$, by using an auxiliary variable, $t$, in the usual manner:
\begin{align*}
\min & \;\; t\nonumber\\
\mathrm{s.t. } & \;\; \eqref{eqq:1}-\eqref{eqq:7},\\
 & t \geq e_i, \forall i\in I,
\end{align*}
\item {\bf $p$-$k$-Center Problem} ($\lambda=(\overbrace{1,\ldots,1}^k, 0, \ldots, 0)$): For the $k$-Centrum problem, in \cite{OT03} the authors derive a formulation similar to the one for the center problem:
\begin{align*}
\min & \;\; k\, t +  \dsum_{i\in I} r_i \nonumber\\
\mathrm{s.t. } & \;\; \eqref{eqq:1}-\eqref{eqq:7},\\
 & r_i \geq e_i - t, \forall i\in I, \\
& t \geq 0,\\
& r_i \geq 0, \forall i \in I.
\end{align*}
\end{itemize}
\end{remark}
Note also that the explicit expression of $\varepsilon_{x_i}(\bbeta_j,\alpha_j)$ and then, the difficulty of the optimization problem above, depends (apart from the binary variables that appears in the problem) on the choice of the distance measure $\d$ which defines the residuals of the fitting.  In what follows we describe general shapes for the distances inducing the residuals and how they can be incorporated to \eqref{FHBEP}.

\subsection{Vertical Distance Residuals}

Although not rigorously a distance measure, the so-called \textit{vertical distance} is a very common measure for computing the residuals in Data Analysis. The vertical distance is computed as the absolute deviation, with respect to one of the coordinates, of the hyperplane. Without loss of generality, we consider that the deviation is computed with respect to the $d$-th coordinate, and then, one can assume that $\beta_{jd}=-1$ for $j \in J$. Given $x\in \R^d$ and an hyperplane $\mathcal{H}(\alpha,\bbeta)$ the vertical distance residual is calculated as:
$$
\varepsilon_{x}(\bbeta,\alpha) = \left| x_{d} - \alpha- \dsum_{\ell=1}^{d-1} \bbeta_\ell x_{\ell}\right|.
 $$
This measure can be incorporated to \eqref{FHBEP}, replacing  \eqref{eqq:1} by the following set of linear constraints:
\begin{align*}
e_i \geq x_{id} - \alpha_{j}- \dsum_{\ell=1}^{d-1} \bbeta_{j\ell} x_{i\ell} - M_{ij}(1- z_{ij}), \forall  i\in I, j\in J,\\
e_i \geq -x_{id} + \alpha_{j}+ \dsum_{\ell=1}^{d-1} \bbeta_{j\ell} x_{i\ell} - M_{ij}(1- z_{ij}), \forall  i\in I, j\in J.
\end{align*}
Thus, becoming \eqref{FHBEP} a Mixed Integer Linear Programming problem.

\begin{remark}[Support Vector Regression]

One particular case in which vertical residuals are used in Machine Learning tools is in Support Vector Regression (SVR). \cite{Vapnik13} proposed this methodology for obtaining regression models based on Support Vector Machines as introduced in \cite{Vapnik}. The method is based on fitting a hyperplane to the set of points $\{x_1, \ldots, x_n\}$ with a modified vertical distance, such that only the residuals greater than a given threshold $\Delta\geq 0$ are accounted, apart from maximizing the separation between the observations at each of the sides of the hyperplanes. SVR can be modeled as follows:
\begin{align*}
\min & \dfrac{1}{2} \|\bbeta\|^2_2 + C \dsum_{i \in N} e_i\\
\mbox{\rm s.t. } & e_i \geq \left| x_{id} - \dsum_{k=1}^{d-1} \bbeta_k x_{ik} - \alpha\right| - \Delta, \forall i\in I,\\
& \bbeta \in \R^{d-1}, \alpha \in \R,\\
& e_i \geq 0, \forall i\in I,
\end{align*}
where $C$ is a given parameter.

Observe that the measure used in this approach is nothing but a truncated version of the vertical distance:
$$
 \varepsilon_{x}(\bbeta,\alpha) = \left\{ \begin{array}{cl} | x_{d} - \alpha - \dsum_{\ell=1}^{d-1} \bbeta_\ell x_{\ell}| & \mbox{if  $| x_{d} - \alpha - \dsum_{\ell=1}^{d-1} \bbeta_\ell x_{\ell}| > \Delta$,}\\
 0 & \mbox{otherwise.}
 \end{array}\right.
 $$
Thus, this shape of the residuals can also be embedded in our multisource framework, just by adding to the objective functions the terms measuring the norms of the coefficients of the hyperplanes, i.e., replacing the objective function in \eqref{FHBEP} by
 $$
 \dfrac{1}{2} \dsum_{j\in J} \|\bbeta_j\|^2_2 + \dsum_{i\in I} \lambda_i \;e_{(i)}.
$$
In case  $p=1$ and $\lambda=(1, \ldots, 1)$, we obtain classical SVR taking also into account the parameter $\Delta$, but more flexible counterparts can be generated with our framework.
\end{remark}

\subsection{Norm-based Residuals}

For general norm-based distances,  a given observation $y^t= (y_{1}, \ldots, y_{d})$ and a set of $p$ hyperplanes defined by $\bbeta_1, \ldots, \bbeta_p \in \R^{d}$ and $\alpha_1, \ldots, \alpha_p\in \R$  inducing the arragement $\mathbb{H} = \Big\{\mathcal{H}(\bbeta_j,\alpha_j): j \in J\Big\}$, based on \cite[Theorem 2.1]{mangasarian}, the projection, $\hat y$, of $y$ consistent with the residual $\varepsilon$ induced by a norm $\|\cdot\|$ is
\begin{equation*}
\hat y= y_{-0} -\min_{j\in J} \frac{\alpha_{j}+\bbeta_{j}^t y}{\|(\bbeta_{j1}, \ldots,\bbeta_{jd})\|^*}\k(\bbeta_{j}),
\end{equation*}
where $\|\cdot \|^*$ is the dual norm of $\| \cdot \|$ and $\k(\bbeta)= \displaystyle \mbox{\rm arg }\dmax _{\|z\|=1}  (\bbeta_{j1}, \ldots,\bbeta_{jd})^t z$. Moreover, the residuals can be written as:
\begin{equation} \label{eq:normdist}
\varepsilon_y(\mathbb{H})= \min_{j\in J}  \frac{|\alpha_j+\bbeta_{j}^t y|}{\|(\bbeta_{j1}, \ldots,\bbeta_{jd})\|^*}.
    \end{equation}

\begin{remark}[$\ell_1$-norm case]\label{rem3}
In the  case of the $\ell_1$-norm residuals, the expression above reduces to:
\begin{equation}\label{res:l1}
\varepsilon_{x_i}(\bbeta_j,\alpha_j)= \min_{j\in J}  \frac{|\alpha_j+\bbeta_{j}^t x_i|}{ \dmax_{\ell=1, \ldots, d} |\bbeta_{j\ell}|},
\end{equation}
and constraints \eqref{eqq:1} can be replaced in \eqref{FHBEP} by:
\begin{align}
&e_i \geq \alpha_{j} + \dsum_{\ell=1}^d \bbeta_{j\ell} x_{i\ell} - M_{ij} (1-z_{ij}), \forall i\in I, \forall j\in J\label{eqq:l1-1}\\
& e_i \geq -\alpha_{j} - \dsum_{\ell=1}^d \bbeta_{j\ell} x_{i\ell} - M_{ij} (1-z_{ij}), \forall i\in I, \forall j\in J\label{eqq:l1-2}\\
& \bbeta_{j\ell} = \eta_{j\ell}^+-\eta_{j\ell}^-, \forall j\in J, \ell = 1,\ldots,d,\label{eqq:l1-3}\\
& \eta_{j\ell}^+\leq U_{j\ell}\; \xi_{j\ell}, \forall j\in J,\ell=1,\ldots,d, \label{eqq:l1-4}\\
& \eta_{j\ell}^-\leq U_{j\ell}\;(1-\xi_{j\ell}), \forall j\in J,\ell=1,\ldots,d,\label{eqq:l1-5}\\
& \theta_{j\ell} = \eta^+_{j\ell} + \eta^-_{j\ell}, \forall j\in J,\ell=1,\ldots,d,\label{eqq:l1-6}\\
&  \theta_{j\ell} \leq  1, \forall j\in J,\ell=1,\ldots,d,\label{eqq:l1-7}\\
& \theta_{j\ell} \geq  \mu_{j\ell}, \forall j\in J,\ell=1,\ldots,d,\label{eqq:l1-8}\\
& \dsum_{\ell=1}^d \mu_{j\ell} =1,\forall j\in J, \label{eqq:l1-9}\\
& \eta^+_{j\ell}, \eta^-_{j\ell}, \theta_{j\ell} \in \R^d_+, \forall j\in J,\ell=1,\ldots,d,\\
& \mu_{j\ell}, \xi_{j\ell}\in \{0,1\}, \forall j\in J,\ell=1,\ldots,d,
\end{align}
where $M_{ij}$ and $U_{j\ell}$ are big enough constants.

We have introduced in the above formulation some new variables to model the $\ell_\infty$-distance in the denominator of the residual \eqref{res:l1}. In particular, for each $j\in J$, the $d$-dimensional variable ${\boldsymbol \theta}_j$ models the vector $(|\bbeta_{j1}|, \ldots, |\bbeta_{jd}|)$ for which the maximum has to be taken; $\eta_{j\ell}^+$ represents $\max\{\bbeta_{j\ell} , 0\}$ and $\eta_{j\ell}^-$ the amount $\max\{-\bbeta_{j\ell} , 0\}$, for all $\ell =1, \ldots, d$. Clearly, one has that $\bbeta_j={\boldsymbol\eta}^+_j-{\boldsymbol\eta}_j^-$ and ${\boldsymbol \theta}_j= {\boldsymbol\eta}^+_j+{\boldsymbol\eta}_j^-$ as imposed in constraints \eqref{eqq:l1-3}-\eqref{eqq:l1-6}, where the auxiliary variables $\xi$ enforce that for each coordinate, either the positive or the negative part assumes value zero (avoiding other types of decompositions). Constraints \eqref{eqq:l1-7}, \eqref{eqq:l1-8} and \eqref{eqq:l1-9}  assure that $\max_{j\in J} |\bbeta_j| =1$ via the auxiliary binary variables $\mu_{j\ell}\in \{0,1\}$ that take value $1$ in exactly one position (the one where the maximum is achieved).

Thus, the formulation assures that $\max_{l=1, \ldots d} |\bbeta_{j\ell}| =1$, and then, the expression of the residual becomes $ \varepsilon_{x_i}(\bbeta_j,\alpha_j)  = \dmin_{j\in J}  |\alpha_j+\bbeta_{j}^t x_i|$ for all $i\in I$ and $j\in J$.

In this case, also \eqref{FHBEP} becomes a Mixed Integer Linear Programming problem.
\end{remark}

\section{Set Partitioning formulation\label{sec:4}}

In this section we alternatively reformulate \eqref{mofhp0} as a set partitioning problem (SPP) (see e.g., \cite{BP76}). Our SPP  is based on the idea that once the $p$ clusters of demand points are known , \eqref{mofhp0} reduces to finding the optimal hyperplanes for each of those clusters in which all the residuals are aggregated by means of an ordered median function.  In particular, let $S$ be a cluster of observations $S\subseteq I$. We denote by $e_S$ the cost of cluster $S$, i.e. the overall aggregation of the residuals of the data in $S$, and for each $i\in S$, $e^i_S$ the marginal contribution of observation $i$ in the cluster ($e_S = \dsum_{i \in S} e^i_S$). Finally, we define the variables
$$
y_S=\left\{ \begin{array}{ll} 1 & \mbox{if cluster $S$ is selected,}\\
0 & \mbox{otherwise} \end{array} \right., \quad \text{ for all } S \subseteq I.
$$
The set partitioning formulation for \eqref{mofhp0} is:
\begin{align}
\min & \dsum_{i\in I} \dsum_{S \subseteq I: S\ni i}  \lambda_i e^{(i)}_{S} y_S\\
\mbox{s.t. } & \sum_{S \subseteq I} y_S=p,  \label{c0:13}\\
 & \sum_{S \subset I:\atop S \ni i}  y_S=1, \forall \; i\in I,  \label{c0:14}\\
 & y_S\in \{0,1\}, \; \forall S\subseteq I.
\end{align}
where $e^{(i)}_S$ is the $i$-th element in the sorted sequence of (active) residuals. In the above formulation the objective function computes the ordered median aggregation of the residuals (each demand point $i$ allocated to its cluster $S$). Constraint \eqref{c0:13} assures that $p$ clusters have to be computed and constraints \eqref{c0:14} that each observation belongs to a single cluster.

In the same manner that we formulate the ordered median objective function in the compact formulation we can equivalently reformulate the problem above as follows:
\begin{align}
\min & \dsum_{k\in I} u_k+\dsum_{i \in I} v_i \\
\mbox{s.t. } &\;\; u_k + v_i \geq \lambda_k \;\dsum_{S \ni k} e^i_S y_S, \;\; \forall i, k\in I,\label{c:mp1}\\
 & \sum_{S \subseteq I} y_S=p,  \label{c:mp2}\\
 &  \sum_{S \subset I:\atop S \ni i}   y_S=1,\;\; \forall \; i\in I,  \label{c:mp3}\\
 & y_S\in \{0,1\}, \;\; \forall S\subset I,\nonumber\\
 & u_k, v_i \in \R,\;\; \forall i, k\in I.\nonumber
\end{align}
This problem will be referred to as the \textit{Master Problem}.

The problem above, although adequately solves the problem of finding the $p$ hyperplanes once the optimal clusters are computed, has an exponential number of variables (and coefficients to incorporate to constraints \eqref{c:mp1}), and then it is hard to solve unless the number of points is very small. Thus, we propose a column generation (CG) approach for solving, efficiently, the problem above by adding new variables to the model as needed and not considering all of them at the same time. A pseudocode indicating the general procedure is shown in Algorithm \ref{alg:cg}.

Initially, a (small) subset of the $y$-variables is considered (those indexed by the sets in $\mathcal{S}_0$) and a relaxed version of the problem is solved with only these variables. It implies to compute the amounts $e_S^i$ for all $S\in \mathcal{S}_0$ and $i\in S$. Next, it has to be checked whether the optimality condition is satisfied. If it is not the case, a new set of variables is found and added to the relaxed problem and the procedure is repeated.

\begin{algorithm}[!ht]
  \KwData{$\{x_1, \ldots, x_n\} \subseteq \R^d$, $p\in \Z_+$ ($p>0$), $\lambda_1 \geq \cdots \geq \lambda_n \geq 0$.}

\begin{description}
\item[1. Preprocessing:] Compute a set of initial solutions for the problem $\mathcal{S}_0 = \{S_1, \ldots, S_K\}$ with $S_k \subseteq I$ for all $k=1, \ldots, K$.
\item[2. Relaxed Master:] Solve the relaxed master problem:
\begin{align}
\min & \dsum_{k\in I} u_k+\dsum_{i \in I} v_i \nonumber\\
\mbox{s.t. } & u_k + v_i \geq \lambda_k \;\dsum_{S \ni k} e^i_S y_S, \forall i, k\in I,\nonumber\\
 & \sum_{S \in \mathcal{S}_0} y_S=p, \label{mp}\tag{RMP}\\
 &  \sum_{S \in \mathcal{S}_0:\atop S \ni i}   y_S=1,  \forall \; i\in I, \nonumber\\
 & 0\le y_S \le 1, \; S\in \mathcal{S}_0.\nonumber
\end{align}
\item[3. New Columns]: Check if new columns have to be added to \eqref{mp}.
\end{description}

\hspace*{0.4cm} \begin{minipage}[t]{0.4\textwidth}
\eIf{Optimality is satisfied}{$\mathcal{C}^*=\{S \subseteq I: y_S^*=1\}.$} 
 {Update $\mathcal{S}_0$ with the new columns and go to {\bf 2.}}
 \end{minipage}

\KwResult{\noindent$\{\mathcal{H}(\bbeta_S,\alpha_S): S \in \mathcal{C}^*\}$.}

 \caption{General Scheme for the CG approach.\label{alg:cg}}
\end{algorithm}

The crucial steps in the implementation of the CG approach are the following:
\begin{enumerate}
\item {\it Preprocessing:} Generation of initial feasible solutions in the form of initial clusters (and their costs). This step may be improved by the theoretical properties verified by the corresponding optimal hyperplanes. We have implemented different initial solutions based on properties of the optimal solution of median and center hyperplanes (see Section \ref{ss:mch}).
\item {\it Pricing:} As already mentioned,  in set partitioning problems, instead of solving initially the  problem with the whole set of exponentially many variables, the variables have to be incorporated \textit{on-the-fly} by solving adequate pricing subproblems derived from previously computed solutions until the optimality of the solution is guaranteed.
\item {\it Branching:} The rule that creates new nodes of the branch and bound tree when a fractional solution is found at a node of the search tree. In this problem, we have adapted the Ryan-and-Foster branching complemented by a secondary ad-hoc branching in some special situations.
\end{enumerate}

In what follows we describe how each of the items above is performed in our proposal.

\subsection{Preprocessing}

In the preprocesing phase, we generate different types of initial solutions, which implies the initialization of the CG algorithm with a given set of variables.

We consider different types of initial solutions derived from the construction of hyperplanes fitting the sets of points. First, to initialize the pool of columns, $\mathcal{S}_0$, we randomly generate hyperplanes passing through $d$ original points. Among the various strategies compared, we have eventually implemented one that performs completions with $d-2$ points of all possible couples of original points. This strategy augment $\frac{n(n-1)}{2}$ new variables into the pool. In addition, we also augment to the pool the best single hyperplane that fits all the points, assuring that the problem is feasible at the root node of the branch-and-price tree. Finally, apart from the above initial columns, we also charge an initial heuristic solution (in the $y$-variables) so as to have a good upper bound in our branch-and-price algorithm. Our algorithm chooses at random $p$ mutually disjoint subsets of  $d$ points and finds the hyperplanes determined by those $p$ sets of $d$ points. Next, we perform a 1-interchange heuristic generating a new hyperplane that replaces, one at a time,  one of those currently considered in the configuration until the first iteration where no improvement is possible. The incumbent set of hyperplanes and their corresponding allocations is considered an initial solution that is loaded into the solver.

\subsection{Median and center optimal hyperplanes\label{ss:mch}}

We have used the following properties to build the initial solutions of our CG approach since they determine optimal hyperplanes for specific objective functions, see e.g., \cite{Schobel03}.

\begin{lemma}
The following properties are verified:
\begin{enumerate}
\item {\rm Weak incidence property}: There exists an optimal median hyperplane passing through $d$ affinely independent points.
\item {\rm Pseudo-halving property}: Every optimal median hyperplane, $\mathcal{H(\beta^*,\alpha^*)}$  verifies
\begin{center}
$\# \Big\{i \in I: x_i\in\mathcal{H}^-(\beta^*,\alpha^*)\Big\} \leq \frac{n}{2}$ and  $\# \Big\{i \in I: x_i\in\mathcal{H}^+(\beta^*,\alpha^*) \Big\} \leq \frac{n}{2} $.
\end{center}
\item {\rm Weak blockedness property}: There exists an optimal center hyperplane that is at maximum distance from $d+1$ of the points.
\item {\rm Parallel facets property}: There exists an optimal center hyperplane that is parallel to a facet of the convex hull of the given points.
\end{enumerate}
\end{lemma}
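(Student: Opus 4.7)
The plan is to verify each property by a perturbation/degree-of-freedom argument on the normalized parameter space $\{(\bbeta,\alpha) \in \R^d \times \R : \|\bbeta\|^* = 1\}$, which may be adopted without loss of generality because the residual \eqref{eq:normdist} is invariant under positive scaling of $(\bbeta,\alpha)$. For the pseudo-halving property (2), I would argue by contradiction: suppose $|\{i : x_i \in \H^+(\bbeta^*,\alpha^*)\}| > n/2$ at an optimum. Translating the hyperplane infinitesimally toward $\H^+$ (i.e., shifting $\alpha^*$ in the direction that decreases $\alpha + \bbeta^{*t} x$ for $x \in \H^+$), the residuals of the points in $\H^+$ strictly decrease, those of the points in $\H^- \cup \H$ increase by the same infinitesimal amount (those on $\H$ becoming strictly positive), and the net change in the sum of residuals is proportional to $|\H^-| + |\H| - |\H^+| < 0$, contradicting optimality. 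A symmetric translation rules out $|\H^-| > n/2$.

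For the weak incidence property (1), I would exploit the fact that the median objective is a sum of convex functions of the form $|\alpha + \bbeta^t x_i|/\|\bbeta\|^*$, each piecewise affine (for polyhedral norms) or at least non-smooth along the arrangement $\{\alpha + \bbeta^t x_i = 0\}_{i \in I}$ in parameter space. The minimum of this sum over the normalization manifold can be shown to be attained at a vertex of the arrangement, where $d$ of the incidence equations $\alpha + \bbeta^t x_i = 0$ are simultaneously active, so $d$ data points lie on the fitted hyperplane; affine independence of these $d$ points holds generically and, in degenerate configurations, can be recovered by an arbitrarily small symbolic perturbation of the data preserving the optimizer.

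For the center properties (3) and (4), I would reduce to a width computation. Fixing $\bbeta$, the optimal intercept is $\alpha^*(\bbeta) = -\tfrac{1}{2}(\max_i \bbeta^t x_i + \min_i \bbeta^t x_i)$, with center value $w(\bbeta) = \tfrac{1}{2}(\max_i \bbeta^t x_i - \min_i \bbeta^t x_i)/\|\bbeta\|^*$, i.e., the normalized width of $\{x_1,\ldots,x_n\}$ in direction $\bbeta$. The outer problem reduces to minimizing $w(\bbeta)$ over directions, and the minimum-width direction of a finite point set is classically perpendicular to a facet of its convex hull, yielding (4). For (3), the points attaining $\max_i \bbeta^t x_i$ and $\min_i \bbeta^t x_i$ are precisely those at maximum distance from the optimal hyperplane; at a minimum-width direction these binding points collectively support both a face and a parallel opposite face of $\mathrm{conv}(x_1,\ldots,x_n)$, forcing at least $d+1$ such binding points, else a perturbation of $\bbeta$ would strictly reduce the width.

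The main obstacle will be making the vertex-attainment argument for (1) fully rigorous when the norm is not polyhedral, since the objective is then only convex and piecewise smooth rather than piecewise affine; this can be addressed either by approximating the norm by polyhedral norms and passing to a limit, or by a direct KKT analysis on each smooth stratum of the objective. A secondary delicate point is the Carath\'eodory-type counting in (3) when several data points coincide in a given projection direction; these ties are resolved by standard arguments on the subdifferential of the max function.
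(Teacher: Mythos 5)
The paper gives no proof of this lemma: it is imported from the hyperplane-location literature (the Sch\"obel references) and used only to seed the column generation, so there is no in-paper argument to match; what follows assesses your reconstruction on its own. Your treatment of the pseudo-halving property (2) is correct and standard: a translation of the hyperplane changes every residual by the same amount $\pm\delta/\|\beta\|^{*}$ (points lying on the hyperplane included), so an open halfspace containing more than $n/2$ points would contradict optimality. The other parts, however, contain genuine gaps.

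The most serious is (4): the ``classical'' fact you invoke --- that the minimum-width direction of a finite point set is perpendicular to a facet of its convex hull --- holds only in the plane. For the four vertices of a regular tetrahedron with Euclidean residuals, the minimal width is attained along the common perpendicular of two opposite edges (value $1/\sqrt{2}$ times the edge length, versus $\sqrt{2/3}$ for every facet-normal direction), so no optimal center hyperplane is parallel to a facet of the hull; your reduction to width is fine, but the concluding step (and indeed the parallel-facet property itself) is a $d=2$, line-location statement and cannot be claimed in general dimension. In (1), the functions $|\alpha+\beta^{t}x_i|/\|\beta\|^{*}$ are \emph{not} convex in $(\beta,\alpha)$; after normalizing $\|\beta\|^{*}=1$ you are minimizing a convex function over a nonconvex manifold, so attainment at a vertex of the incidence arrangement does not follow from convexity. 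The workable route is the cell-wise argument the paper itself uses for its ordered-median lemma: on each sign cell the objective is a nonnegative linear function divided by $\|\beta\|^{*}$, hence quasiconcave, and its minima occur at extreme points of the cells. Moreover, degeneracies cannot be dismissed by ``an arbitrarily small symbolic perturbation of the data preserving the optimizer'' --- perturbing the data changes the optimizer; the standard fix is to move within the set of optimal hyperplanes until $d$ affinely independent incidences (respectively $d+1$ blocking points in (3)) are acquired. This is also why (3) is an existence statement: with fewer than $d+1$ binding points a perturbation of $\beta$ need not strictly reduce the width, it may leave it unchanged, so your strict-decrease claim needs that additional argument.
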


%
For the more general ordered median objective function, we have proved the following result that characterizes the ordered median hyperplanes.  In what follows, we derive a novel result for these hyperplanes that will be useful in the preprocessing phase of our CG approach.

Let us introduce the following notation:

\begin{itemize}
\item Let $\mathcal{B}$ be the subdivision of $\mathbb{R}^{d+1}$ induced by the following arrangement of hyperplanes:
$$
B_{ij}^{ab} =\Big\{ (\beta,\alpha)\in \mathbb{R}^{d+1}: a(\beta x_i+\alpha)=b(\beta x_j+\alpha)\Big\}, \forall i,j\in I, a,b\in\{-1,1\}.
$$
\item  Let $\mathcal{S}$ be the subdivision of of $\mathbb{R}^{d+1}$ induced by the following arrangement of hyperplanes
$$
S_i=\Big\{(\beta,\alpha)\in \mathbb{R}^{d+1}: \beta x_i+\alpha =0\Big\}, \forall i \in I.$$
\end{itemize}

\begin{lemma}
If $\mathcal{H}(\beta,\alpha)$ is an optimal ordered median hyperplane then $(\beta,\alpha)$ is an extreme point of a cell in the subdivision of $\mathbb{R}^{d+1}$ induced by the intersection $\mathcal{B}\cap \mathcal{S}$.
\end{lemma}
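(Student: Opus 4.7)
The plan is to show that the ordered median objective, viewed as a function of $(\beta,\alpha)$, is piecewise linear with pieces that are exactly the cells of the common refinement $\mathcal{B}\cap\mathcal{S}$, and then to apply a standard linear-programming argument on each cell.

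First I would fix a cell of the arrangement $\mathcal{S}$ and observe that on such a cell the sign vector $\sigma\in\{-1,+1\}^{n}$ given by $\sigma_i=\sign(\beta x_i+\alpha)$ is constant, since the only way $\sigma_i$ can flip is by crossing $S_i$. Consequently each residual $\varepsilon_{x_i}(\beta,\alpha)=|\beta x_i+\alpha|$ coincides with the linear function $\sigma_i(\beta x_i+\alpha)$ throughout the cell. Next I would refine with $\mathcal{B}$: with signs frozen to $\sigma$, the hyperplane $B_{ij}^{\sigma_i,\sigma_j}$ is precisely the locus where $|\beta x_i+\alpha|=|\beta x_j+\alpha|$, so on each cell of $\mathcal{B}\cap\mathcal{S}$ the permutation $\pi$ that sorts the residuals in non-increasing order is also constant. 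Putting both observations together, on any such cell $C$ the objective reduces to the single linear function
\[
\omf(\varepsilon_{x_1},\ldots,\varepsilon_{x_n})=\sum_{k=1}^{n}\lambda_{k}\,\sigma_{\pi(k)}\bigl(\beta x_{\pi(k)}+\alpha\bigr),
\]
with $\sigma$ and $\pi$ depending only on $C$.

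Armed with this piecewise linearity, I would take any optimal $(\beta^{*},\alpha^{*})$, select a cell $C$ of $\mathcal{B}\cap\mathcal{S}$ whose closure $\overline{C}$ contains it, and note that $(\beta^{*},\alpha^{*})$ must minimize the above linear function over the convex polyhedron $\overline{C}$. A linear function that attains a finite minimum on a pointed convex polyhedron can always be pushed, along edges on which the objective is non-increasing, to an extreme point of $\overline{C}$; that extreme point is, by definition, an extreme point of a cell of $\mathcal{B}\cap\mathcal{S}$.

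The main obstacle is ensuring that the relevant cells actually \emph{have} extreme points, because cells of a hyperplane arrangement can be unbounded and pointless (slabs, half-spaces, etc.). This I would handle by invoking the normalization implicit in the definition of a hyperplane: for vertical residuals one fixes $\beta_{d}=-1$ and for norm-based residuals one fixes $\|\beta\|^{*}=1$ (as done explicitly, for instance, in Remark~\ref{rem3} for the $\ell_{1}$ case). These restrictions are themselves affine (or piecewise affine) conditions that, once added to $\mathcal{B}\cap\mathcal{S}$, produce a refinement whose cells are pointed polyhedra while preserving the sign-and-order structure on which the piecewise linearity relies. With this in place the LP argument of the previous paragraph goes through and produces the desired optimal vertex $(\beta,\alpha)$.
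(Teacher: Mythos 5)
There is a real gap in the step where you replace each residual by $|\beta x_i+\alpha|$ and conclude that the objective is piecewise \emph{linear} on the cells of $\mathcal{B}\cap\mathcal{S}$. For norm-based residuals the correct expression is $\varepsilon_{x_i}(\beta,\alpha)=\frac{|\beta x_i+\alpha|}{\|\beta\|^*}$, so on a cell the objective is not linear but the ratio of a nonnegative linear function over $\|\beta\|^*$. You propose to eliminate the denominator by adding the normalization $\|\beta\|^*=1$ and you assert that this constraint is ``affine (or piecewise affine)''; that is only true for the vertical distance ($\beta_d=-1$) and for polyhedral norms. For a general norm (e.g.\ the Euclidean case, which the lemma covers) the set $\|\beta\|^*=1$ is a smooth non-polyhedral surface, the resulting regions are not polyhedra, and the LP ``push along edges to a vertex'' argument breaks down: a linear function minimized over a spherical patch of a cell typically attains its minimum in the relative interior of that patch, not at a point where $d$ arrangement hyperplanes are tight. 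So your argument, as written, only establishes the lemma for vertical-distance or polyhedral-norm residuals. (A secondary, more cosmetic issue: once you add normalization constraints, the vertices you obtain are extreme points of the \emph{refined} regions, not literally of cells of $\mathcal{B}\cap\mathcal{S}$; the paper is equally loose here, since all hyperplanes in $\mathcal{B}$ and $\mathcal{S}$ pass through the origin and the cells are cones, so the intended reading is extreme points/rays after factoring out the scaling.)

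The paper's proof sidesteps the difficulty you run into by \emph{not} removing the denominator: within a cell of $\mathcal{B}\cap\mathcal{S}$ the sign pattern and the sorting are constant, so the objective equals
\[
\frac{\sum_{i\in I}\lambda_i\,\mathrm{sign}(\beta x_i+\alpha)(\beta x_i+\alpha)}{\|\beta\|^*},
\]
a nonnegative linear function divided by a convex function, hence quasiconcave on the cell; a quasiconcave function attains its minimum over a convex cell at its extreme points, for \emph{any} norm. Your piecewise-linear route is essentially the specialization of this argument to the cases where the normalization happens to be (piecewise) linear; to cover the general statement you either need to restrict the lemma to vertical/polyhedral residuals or replace the LP step by the quasiconcavity argument on the un-normalized ratio.
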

\begin{proof}
For a given hyperplane $\mathcal{H}(\beta,\alpha)$, let us consider the objective function of the problem, namely $\sum_{i\in I} \lambda_i e_{(i)}$, where $e_i=D(\mathcal{H}(\beta,\alpha),x_i)$.

It is clear that within each cell of the subdivision $\mathcal{B}$ the sorting of the residuals does not change. In addition, in each cell of the subdivision $\mathcal{S}$ the sign of $\beta x_i+\alpha$ is either positive or negative (but does not change) for each $i\in I$. Therefore, if $C\in \mathcal{B}\cap \mathcal{S}$ is a cell in the subdivision induced by $\mathcal{B}\cap \mathcal{S}$, there is permutation $\sigma$ that fixes the sorting of the residuals and also a constant vector $(sign(\beta x_1+\alpha), \ldots, sign(\beta x_n+\alpha))\in \{-1,1\}^n$ such that
$$\sum_{i\in I} \lambda_i e_{(i)} = \sum_{i\in I} \lambda_i \frac{sign(\beta x_i+\alpha) (\beta x_i+\alpha)}{\|\beta\|^*}= \frac{\sum_{i\in I} \lambda_i sign(\beta x_i+\alpha) (\beta x_i+\alpha)}{\|\beta\|^*}.$$
The above function is the ratio of a non-negative linear function and a convex function, then it is quasiconcave provided that $(\beta,\alpha)\in C$. Therefore, it attains its minima at the extreme points of this region. Hence, if $\mathcal{H}(\beta,\alpha)$ is an optimal ordered median hyperplane $(\beta,\alpha)$ must be an extreme point of some of those cells.
\end{proof}
The above result allows us to interpret optimal ordered median hyperplanes also in terms of a geometrical description as those that meet $d$ conditions between the following cases: i) passing through points $x_i,\; i\in I$, and ii) being at the same distance of two points $x_i,x_j, \; i,j\in I$. Optimal ordered median hyperplanes must  also satisfy, for some $k=1,\ldots,d$, the following property: it contains $k$ points $x_i,\; i\in I$ and it is at the same distance from $d-k$ pairs $x_i, x_j$ $i,j\in I$.

In our computational results we have computed the initial solutions and the initial pool of variables for the objective functions of type median, $k$-centrum and centdian, using the weak incidence property, whereas for the center objective function we use the weak blockedness property.

\subsection{Pricing problem}

Certifying optimality in a CG approach avoiding the inclusion of all the columns into the relaxed master problem, \eqref{mp}, requires testing whether a new tentative column must be added to the problem. In case no new candidates are added to the master problem, the optimality is guaranteed, otherwise, one should add the new columns and repeat the process {(Step 3 in Algorithm \ref{alg:cg}). Searching for new columns to be added to the model will be performed by looking at the dual formulation of  the set partitioning formulation.

Let $\gamma$ be the dual variable for constraint \eqref{c:mp2}, $\phi_i$ the dual variables for constraints \eqref{c:mp3} and $\delta_{ik}$ the dual variables for constraints \eqref{c:mp1}. Then, the dual of the Master Problem is the following:
\begin{align*}
\max & -p \gamma+ \sum_{i\in I}^n \phi_i \\
\mbox{s.t. } & \sum_{k\in I} \delta_{ik} = 1, \forall i\in I, \\
& \sum_{i\in I} \delta_{ik} = 1,\forall k \in I, \\
& -\sum_{i\in S}\sum_{k=1}^n \lambda_ke_S^i\delta_{ik} - \gamma + \sum_{i\in S} \phi_i \leq 0,  \forall S \subseteq \mathcal{S}_0, \\
& \delta_{ik}, \gamma,\phi_i \geq 0.
\end{align*}
Hence, for any $S \subset \mathcal{S}_0$, since $y_S$ does not appear in the objective function, the reduced cost for variable $y_S$ is:
$$
\bar e_S=\gamma-\sum_{i\in S} \phi_i +\dsum_{i \in S} \sum_{k=1}^n \lambda_k \delta_{ik} e^i_S.
$$
Then, given an optimal dual solution $( \gamma^*,\phi^*, \delta^*)$, and considering the binary variables
$$
w_{i} = \left\{\begin{array}{cl}
1 & \mbox{ if the $i$th observation is chosen for the set $S$ indexing the new column,}\\
0 & \mbox{otherwise,}
\end{array}\right.
$$
the pricing problem is to choose the subset $S$ with minimum reduced cost, i.e., to solve:
\begin{align*}
\displaystyle\min&\displaystyle-\sum_{i\in I}\phi_i^*w_{i}+\gamma^*+\sum_{i\in I} c_i^* r_{i}&\\
\mbox{s.t. } & z_i \ge  \varepsilon_{x_i}(\bbeta,\alpha)), \;\; \forall i \in I,\\
&r_{i} \geq z_i - M(1-w_{i}) ,\;\; \forall i \in I,\\
&w_{i}\in\{0,1\},\;\;  \forall i\in I,\\
&z_i,r_{i}\ge 0, \;\; \forall i\in I,\\
&\bbeta \in \R^d, \alpha \in \R.
\end{align*}
where $c^*_i=\sum_{k=1}^n\lambda_{k}\delta^*_{ik}$, $\forall i \in I$.

If the optimal value of this problem is negative, the new column $y_{\hat S}$ is added to the pool, where $\hat S= \{ i : w_i = 1 \}$,  since its reduced cost in the (RMP) is negative, and thus,  it  improves the objective function of the master problem. Otherwise, optimality is certified and we are finished.

\subsubsection{Heuristic pricing}

The exact pricing routine described above is an NP-hard problem and thus in general, it takes time finding new columns to be added to the pool or to certify optimality of the reduced master problem. This last task cannot be avoided, provided that we design an exact solution algorithm. Nevertheless, in many occasions finding promising new variables can be done at very low computational time resorting to heuristic schemes.

In our problem, we propose to test hyperplanes chosen from a discrete set of potential candidates. To do so, we set a $d$-dimensional grid on the normalized space of $\alpha$ and $\bbeta$ coefficients. Each point represents a hyperplane to be tested. Once the candidate $(\alpha,\bbeta)$ is chosen we determine which set of points $S$ is going to be added to the new variable $y_S$. This is done with a simple greedy rule: choose those points with negative reduced cost with respect to $H_\beta$.

If after this process we find  a hyperplane that produces a negative reduced cost, we add this new column to the pool. Otherwise, we proceed with the exact pricer. This scheme speeds up the search for new columns without loosing the exactness of the whole algorithm.

\subsection{Branching}

The set partitioning formulation of  the  \ref{mofhp0} is often not solved at the root node, in contrast with what is stated in \cite{PJKW17}. Thus, some branching strategy must be implemented to cope with the branch and bound search. Ryan-Foster (R-F) is one of the most popular techniques for branching in set partitioning problems (see \cite{Ryan1981}). If a fractional solution is reached at a node, R-F creates two new branches as follows: Given to elements $i_1,i_2\in I$, they may never go together on a set in the whole branch, or they may always go together, i.e., if one of them belongs to a set $S$, the other one must also be included in $S$.

To implement this branching, we can take advantage of the $w_i$ variables defined on the previous section for the pricing subproblem, to easily adapt this way of branching in our problem, by means of the following constraints:
\begin{itemize}
\item[A)] $w_{i_1}+w_{i_2}=1$ ensuring that elements $i_1$ and $i_2$ are not assigned to the same hyperplane.
\item[B)] $w_{i_1}=w_{i_2}$ ensuring that elements $i_1$ and $i_2$ are assigned to the same hyperplane.
\end{itemize}

Moreover, in our formulation there is a new case in which, despite the fact of having fractional solutions on a node, we will not create new branches following the R-F rule. This fact is motivated because in our problem may appear different columns (different $y$-variables) but being associated to the same set $S$, although possibly with different hyperplanes.

Let $S\subseteq I$ be a subset of points and let $y_S^1,...,y_S^q$ be fractional variables for the same set $S$ although with different hyperplanes $\mathcal{H}(\beta^i,\alpha^i)$, $i=1\ldots q$, with $q > 1$, such that $\dsum_{i=1}^q y_S^i = 1$. If there are no more fractional variables, or the rest of the fractional variables of the node satisfy the same conditions for some other subsets of  points, we cannot apply R-F rule and either the node need not be branched (see Theorem \ref{th:RF} and Remark \ref{re:RF}) or a different branching strategy must be implemented in these cases.


Without loss of generality, we will describe the new branching for the case in which two fractional variables, $y_S^{1}$ and $y_S^{2}$, with hyperplanes $\mathcal{H}(\beta^1,\alpha^1)$ and $\mathcal{H}(\beta^2,\alpha^2)$, are obtained in a node for the same subset $S$.
In this situation, the new branching rule that we propose creates three new branches as follows:
\begin{itemize}
\item[1.] A branch where $y_S^{1} =1$ meaning that this variable will be in the solution in this branch. This is easily implemented in the pricing routine since it amounts to avoid considering the elements in $S$ in any further column in that branch because they are already in the set $S$ which is part of the solution. Therefore, it suffices to fix the variables $w_i=0, \ \forall i \in S$ in all the subproblems in the branch.
\item[2.] Analogously, it creates another branch where $y_S^{2} =1$.
\item[3.] The third branch sets $y_S^{1}=y_S^{2}=0$. This branch represents the case in which none of the original fractional solutions are part of the integer solution. Once again, this can be enforced by adding the following constraints to the pricing subproblems of the branch:
\begin{center}
$\displaystyle \left(\left(
|S|-\sum_{i\in S} w_i
\right) +
\left|
|S|-\sum_{i\in I} w_i
\right|
 + \sum_{\ell=1}^d |\beta_{\ell}^j-\beta_{\ell}^*| + |\alpha^j-\alpha^*|
  \right)\cdot M \geq 1, \ \ j=1,2, $
\end{center}
for a big enough constant $M$, where $\beta^*$ and $\alpha^*$ define the new hyperplane $\mathcal{H}(\beta^*,\alpha^*)$. These constraints will make the problem infeasible if and only if all the individuals in $S$, and only the individuals of $S$, belong to the new solution, and moreover, the new solution provides a hyperplane $\mathcal{H}(\beta^*,\alpha^*)$ that is equal to $\mathcal{H}(\beta^1,\alpha^1)$ or $\mathcal{H}(\beta^2,\alpha^2)$.

\end{itemize}

The alternative branching may be necessary in case of using general norm based residuals. Nevertheless, as we show below, the situation is simpler using vertical distance residuals.

\begin{theo} \label{th:RF} Ryan and Foster branching is enough in the set partitioning formulation of \eqref{mofhp0} for the vertical distance residuals: If for a subset of points $S \subseteq I$, there exists a fractional solution $0 < y_S < 1$ with $\# \left\lbrace y_S \neq 0 \right\rbrace > 1$, at a node of the branch and bound tree  then there exists an explicit solution that  combines these variables to obtain a single one satisfying $y_S=1$ ($\# \left\lbrace y_S \neq 0 \right\rbrace = 1$).
\end{theo}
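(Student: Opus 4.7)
My approach exploits the fact that the vertical distance residual, viewed as a function of the hyperplane coefficients, is convex. For each fixed $i\in I$,
$$
\varepsilon_{x_i}(\beta,\alpha)=\Bigl|x_{id}-\alpha-\sum_{\ell=1}^{d-1}\beta_\ell x_{i\ell}\Bigr|
$$
is the absolute value of an affine function of $(\beta,\alpha)$ and is therefore convex. Consequently, for any collection of hyperplanes $\mathcal{H}(\beta^1,\alpha^1),\ldots,\mathcal{H}(\beta^q,\alpha^q)$ and weights $\mu_1,\ldots,\mu_q\ge 0$ with $\sum_j\mu_j=1$, the averaged pair $(\beta^*,\alpha^*)=\sum_j\mu_j(\beta^j,\alpha^j)$ satisfies $\varepsilon_{x_i}(\beta^*,\alpha^*)\le\sum_j\mu_j\,\varepsilon_{x_i}(\beta^j,\alpha^j)$ for every $i\in I$. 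This is the only analytic fact I intend to use, and it is the reason the statement is confined to vertical distances.

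Let $y^1_S,\ldots,y^q_S$ be the fractional variables associated with the common cluster $S$ and with hyperplanes $\mathcal{H}(\beta^j,\alpha^j)$. First I would invoke the partition constraints \eqref{c:mp3}: for any $i\in S$, $\sum_{S'\ni i} y_{S'}=1$, and by the discussion preceding the theorem the block $\{y^j_S\}_{j}$ is the only source of fractional mass on the points of $S$; therefore any other nonzero $y_{S'}$ with $S'\ni i$ would have to equal $1$, which together with $y^j_S>0$ would violate the same partition equation. Hence $\sum_j y^j_S=1$. I then set $\mu_j=y^j_S$, form the combined hyperplane $(\beta^*,\alpha^*)=\sum_j\mu_j(\beta^j,\alpha^j)$, remove the $q$ fractional columns from the incumbent LP solution, and introduce a single integer variable $y^*_S=1$ indexed by $S$ with hyperplane $\mathcal{H}(\beta^*,\alpha^*)$, leaving the remaining $y$-, $u$- and $v$-variables unchanged.

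The remaining task is to check that this replacement is feasible for the master problem and does not worsen its objective. The cardinality constraint \eqref{c:mp2} and every partition constraint \eqref{c:mp3} are preserved because the total mass allocated to $S$ is unchanged and no variable indexed by another set is touched. The ordered median constraints \eqref{c:mp1} change only for $i\in S$, where the right-hand side becomes $\lambda_k\,\varepsilon_{x_i}(\beta^*,\alpha^*)$ in place of $\lambda_k\sum_j y^j_S\,\varepsilon_{x_i}(\beta^j,\alpha^j)$; by the convexity inequality established at the start, the new right-hand side is no larger, so the incumbent $(u^*,v^*)$ still satisfies every constraint and the objective $\sum_k u^*_k+\sum_i v^*_i$ is unchanged. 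The outcome is a feasible master solution with a single nonzero variable indexed by $S$ and with $y_S=1$, as claimed.

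The main obstacle, and the step I expect to require the most care, is precisely the convexity argument: it hinges on $\varepsilon_{x_i}$ being convex in $(\beta,\alpha)$, which holds for vertical residuals but breaks down for the general norm-based residual \eqref{eq:normdist} because the normalization by $\|\beta\|^*$ in the denominator destroys convexity. Averaging two hyperplanes is then no longer guaranteed to decrease any point's residual, which is exactly the configuration that forces the three-way branching devised in the preceding subsection for norm-based residuals.
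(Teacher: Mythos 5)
Your proposal is correct and follows essentially the same route as the paper: the key step in both is that the convex combination $(\beta^*,\alpha^*)=\sum_j y^j_S(\beta^j,\alpha^j)$ (after the normalization $\beta_d=-1$) satisfies $\varepsilon_{x_i}(\beta^*,\alpha^*)\le\sum_j y^j_S\,\varepsilon_{x_i}(\beta^j,\alpha^j)$, which the paper obtains via the triangle inequality for the two-variable case (treating $q>2$ sequentially) and you obtain directly from convexity of the absolute value of an affine map for general $q$. Your additional explicit check of feasibility of the master constraints is sound bookkeeping that the paper leaves implicit.
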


\begin{proof}
Let us consider a subset of points $S\subseteq I$. At a fractional node, we can have two possible scenarios: 1) $\# \left\lbrace y_S \neq 0 \right\rbrace = 1$, hence, it would exist a single hyperplane (a facility) $\mathcal{H}(\beta,\alpha)$ that would serve the points of S, and hence, R-F branching is enough, and 2) $\# \left\lbrace y_S \neq 0 \right\rbrace > 1$. This latter case needs a further analysis since it may seem as if more than one facility would need to be involved to optimally serve the points in S.

Without loss of generality we can assume $\# \left\lbrace y_S \neq 0 \right\rbrace = 2$ (a case with $\# \left\lbrace y_S \neq 0 \right\rbrace > 2$ can be treated sequentially by smaller problems with two solutions). In this situation there are two variables, $y_S^1$ and $y_S^2$, with values $\sigma$ and $1-\sigma$, $\sigma \in (0,1)$, so that $y_S^1+y_S^2=1$.
These variables are represented by two hyperplanes $\mathcal{H}(\beta^1,\alpha^1)$ and $\mathcal{H}(\beta^2,\alpha^2)$, where the cost of a point $i \in S$ with coordinates $x\in\mathbb{R}^d$, $e_i$, is given by $e_i = \sigma D(x,\mathcal{H}(\beta^1,\alpha^1)) + (1-\sigma)D(x,\mathcal{H}(\beta^2,\alpha^2))$. We prove that the hyperplane $\mathcal{H}(\beta^*,\alpha^*)$ defined as
\begin{center}$\mathcal{H}(\beta^*,\alpha^*) = \left\lbrace z \in \mathbb{R}^d \ : \   \sigma (\alpha^1 + \beta^1z) + (1-\sigma)(\alpha^2 + \beta^2z) = 0 \right\rbrace$,
\end{center}
satisfies that $D(x,\mathcal{H}(\beta^*,\alpha^*))\leq \sigma D(x,\mathcal{H}(\beta^1,\alpha^1)) + (1-\sigma)D(x,\mathcal{H}(\beta^2,\alpha^2))$ for vertical  distance residuals, and this would mean that there exists a unique hyperplane that optimally serves all the points in $S$. Therefore, considering $y_S^*$, no further branching is required.

If we consider the normalized hyperplanes $\mathcal{H}(\beta^1,\alpha^1)$, and $\mathcal{H}(\beta^2,\alpha^2)$, such that $\beta^1_d =\beta^2_d=-1$, then $\beta^*_d=-1$, the vertical distance from x to $\mathcal{H}(\beta^*,\alpha^*)$ is
\begin{center}
$D_v(x,\mathcal{H}(\beta^*,\alpha^*) )= \left|x_d - \alpha^* - \dsum_{\ell=1}^{d-1}\beta^*_{\ell}x_{\ell}\right| $.
\end{center}
Hence,
\begin{align*}
D_v(x,\mathcal{H}(\beta^*,\alpha^*) )& = \left|x_d - \alpha^* - \dsum_{\ell=1}^{d-1}\beta^*_{\ell}x_{\ell}\right| \\
 &= \left|\sigma x_d + (1-\sigma)x_d  - (\sigma\alpha^1 + (1-\sigma)\alpha^2) - \dsum_{\ell=1}^{d-1}(\sigma\beta^1_{\ell}+(1-\sigma)\beta^2_{\ell})x_{\ell}\right| \\
  & \leq \sigma \left|x_d - \alpha^1 - \dsum_{\ell=1}^{d-1}\beta^1_{\ell}x_{\ell}\right| + (1-\sigma)\left|x_d - \alpha^2 - \dsum_{\ell=1}^{d-1}\beta^2_{\ell}x_{\ell}\right| \\
 & =\sigma D_v(x,\mathcal{H}(\beta^1,\alpha^1) ) + (1-\sigma)D_v(x,\mathcal{H}(\beta^2,\alpha^2) ).
\end{align*}
\end{proof}

\begin{remark} \label{re:RF}
We prove that under mild conditions, R-F branching is also enough for the $\ell_1$-norm based residuals.

Without loss of generality, assume that there is a solution with two fractional variables $y_S^1$ and $y_S^2$, with values $\sigma$ and $(1-\sigma)$, and corresponding hyperplanes $\mathcal{H}(\beta^1,\alpha^1)$ and $\mathcal{H}(\beta^2,\alpha^2)$. Let us define the set $SP = \left\lbrace j : |\beta_{j}^1| = |\beta_{j}^2|=1, j =1,\ldots,d \right\rbrace$. Hence, if $SP\neq\emptyset$, RF-branching is enough for the $\ell_1$-norm residuals.

Indeed, let $\hat \j$ be an index such that $|\beta_{\hat \j}^1|=|\beta_{\hat \j}^2|=1$ and define
$$sign(\hat \j)= \left\{ \begin{array}{ll} 1 & \mbox{ if } \beta_{\hat \j}^1\cdot \beta_{\hat \j}^2=1\\
-1 & \mbox{ if } \beta_{\hat \j}^1\cdot \beta_{\hat \j}^2=-1.
\end{array} \right.
$$
It is clear that for any $\hat \j\in SP$, $\beta^*_{\hat \j}=\alpha \beta^1+\sign(\hat \j) (1-\alpha) \beta^2$ satisfies $\|\beta^*_{\hat \j}\|_\infty=1$.
Consider for any $\hat \j\in SP$ the hyperplane
$$\mathcal{H}(\beta^*_{\hat \j},\alpha^*) = \left\lbrace z \in \mathbb{R}^d \ : \   \sigma (\alpha^1 + \beta^1z) +\sign(\hat \j)  (1-\sigma)(\alpha^2 + \beta^2z) = 0 \right\rbrace,$$
then for any individual $i\in S$ with coordinates $x_i\in\mathbb{R}^d$, taking into account that $||\beta^1||_\infty =||\beta^2||_\infty =1$, we obtain that
\begin{align*}
\displaystyle D_{\ell_1}(x_i,\mathcal{H}(\beta^*,\alpha^*)) &=
\frac{|\alpha^* + \sum_{\ell=1}^d\beta^*_\ell x_{i\ell}|}{||\beta^*||_\infty}\\
 & \leq \sigma\frac{|\alpha^1 + \sum_{\ell=1}^d\beta^1_\ell x_{i\ell}|}{||\beta^1||_\infty} +
(1-\sigma)\frac{|\alpha^2 + \sum_{\ell=1}^d\beta^2_\ell x_{i\ell}|}{||\beta^2||_\infty}\\
 & = \sigma D_{\ell_1}(x_i,\mathcal{H}(\beta^1,\alpha^1)) +(1-\sigma)D_{\ell_1}(x_i,\mathcal{H}(\beta^2,\alpha^2)).
\end{align*}
 and hence, $y_S^*=1$ is an optimal solution for the problem.

\end{remark}

\section{Computational Results\label{sec:comp_result}}

A series of computational experiments has been performed in order to test the two proposed methodologies. We consider two different sets of instances, one based on \cite{EWC74} dataset and another on synthetic data. For all of them we solve \eqref{mofhp0} for four different objective functions: Weber (W), Center (C), $\lceil \frac{n}{2}\rceil$-Centrum (K) ($\lambda=(\overbrace{1, \ldots, 1}^{\lceil \frac{n}{2}\rceil}, 0, \ldots, 0)$) and $0.9$-Centdian (D) ($\lambda=(1, 0.9, \ldots, 0.9)$) and with the two proposed approaches: the compact approach based on formulation \eqref{FHBEP} and with the branch-and-price methodology.  We test the performance of the algorithms on two different types of residuals: $\ell_1$-norm based residuals and  absolute value vertical distance residuals.

The models were coded in C and solved with SCIP v.6.0.1 using as optimization solver CPLEX 12.8 in a Mac OS El Capitan with a Core i7 CPU clocked at 2.8 GHz and 16GB of RAM memory. A time limit of $5$ hours was fixed for all the instances. It is well-known in the field of location analysis that continuous multifacility ordered median problems are very difficult to solve and already  problems of moderate sizes ($n=50$ demand points) can not often be solved to optimality (see e.g., \cite{BPE16}). The same or even a harder behavior should be expected here since these problems introduce a new degree of difficulty in the representation of general distance based residuals.

\subsection{\cite{EWC74} dataset}

First, we tested our approach on instances based on the classical planar $50$-points dataset provided by \cite{EWC74}. We randomly generate five instances from such a dataset  with sizes $n \in \{ 20, 30, 40, 45\}$  and the entire complete original instance with $n=50$. We run the models for $p\in \{2,5\}$ hyperplanes. The average results obtained for these instances are shown in tables \ref{tab:EilonV} and  \ref{tab:EilonL1}. There, for each combination of $n$ (size of the instance), $p$ (number of hyperplanes to be located) and \texttt{type} (ordered median objective function to be minimized), we provide both for the compact formulation \ref{FHBEP} (Compact) and for the branch-and-price (B\&P) approach: the CPU time in seconds needed to solve the problem (\texttt{Time} (secs.)), the MIP Gap (\texttt{GAP}) remaining after the time limit, the number of nodes (\texttt{Nodes}) explored in the branch and bound tree and the RAM memory (\texttt{Memory} (MB)) in Megabytes required during the execution process. Within each column (\texttt{Time}, \texttt{GAP}, \texttt{Nodes} and \texttt{Memory}), we highlight in bold the best result between the two formulations, namely Compact or B\&P. Table \ref{tab:EilonV} gives the results for the models with vertical distance residuals while Table \ref{tab:EilonL1} provides the results for the $\ell_1$-norm residuals.

As expected,  the difficulty of the problem increases with $n$ and $p$. Problems with smaller $n$ are easier and $p=2$ is also easier than $p=5$. We also observe in Table \ref{tab:EilonV} that the B\&P approach is more efficient than formulation \ref{FHBEP} in all cases with the only exception of center problems. For that type of problem with vertical distance residuals the compact formulation is able to solve all instances in all cases whereas the B\&P reports an overall gap of $20.36\%$. As it can be expected the number of nodes to be explored in order to solve the problems is several orders of magnitude larger for the compact formulation than for the B\&P algorithm. This fact shows that the former formulation is much less accurate than the latter thus requiring many more nodes to be explored to solve the problems, implying a better scalability of the B\&P approach. In addition, \ref{FHBEP} requires very large RAM memory resources since already for $n=50$ points, it demands, in some cases, more than $11$ GB whereas B\&P solves the problems using at most $4$ GB of RAM memory.

Turning to Table \ref{tab:EilonL1} we observe, as expected, that using $\ell_1$-norm residuals make problems harder to solve mainly due to the representation of the projections point-to-hyperplane stated in Remark \ref{rem3}. In this case, the overall gaps increase from $29.36\%$ and $11.24\%$ in Table \ref{tab:EilonV}, for \ref{FHBEP} and B\&P, respectively, to $62.69\%$ and $20.53\%$. This behavior is more severe for  \ref{FHBEP} because already for $n=20$ and $p=5$ that formulation  is not able to certify optimality for any of the problems regardless of the type within the time limit. On the contrary, B\&P is affected less and its behavior is similar to what one observes for vertical distance in Table \ref{tab:EilonV}. The rest of comments regarding number of nodes and memory requirements are similar to those given previously for vertical distances.

\renewcommand{\arraystretch}{0.8}
 \begin{table}
  \centering{\small\begin{tabular}{|c|c|c||rr|rr|rr|rr|}\cline{4-11}
    \multicolumn{3}{c||}{} & \multicolumn{2}{c|}{\texttt{Time} (secs.)} &  \multicolumn{2}{c|}{\texttt{GAP}} &  \multicolumn{2}{c|}{\texttt{Nodes}}  &  \multicolumn{2}{c|}{\texttt{Memory} (MB)} \\\hline
   $n$ &$p$ & \texttt{type} & Compact & B\&P &  Compact & B\&P & Compact & B\&P &Compact & B\&P\\\hline
\multirow{8}{*}{20} & \multirow{4}{*}{2} &  W    &\bf 2.08  & 68.15 & 0.00\% & 0.00\% & 2878  &\bf 2     & \bf3     & 23 \\
      &       &  K    &\bf 2.10  & 69.18 & 0.00\% & 0.00\% & 2878  & \bf2     &\bf 3     & 23 \\
      &       &  D    & \bf6.70  & 86.14 & 0.00\% & 0.00\% & 2904  &\bf 2     &\bf 3     & 24 \\
      &       &  C    &\bf 0.11  & 3171.27 & 0.00\% & 0.00\% & \bf35    & 5070  & \bf1     & 1347 \\\cline{2-11}
      & \multirow{4}{*}{5} &  W    & 12411.90 & \bf23.48 & 18.67\% &\bf 0.00\% & 23623465 & \bf16    & 2226  & \bf13 \\
      &       &  K    & 12422.17 &\bf 23.31 & 18.70\% & \bf0.00\% & 23560539 & \bf16    & 2221  & \bf13 \\
      &       &  D    & 13082.16 &\bf 43.96 & 18.67\% &\bf 0.00\% & 24131841 & \bf26    & 2161  &\bf 15 \\
      &       &  C    & \bf103.70 & 2798.43 & 0.00\% & 0.00\% & 204693 &\bf 12259 & \bf36    & 300 \\\hline
\multicolumn{3}{|c}{\bf Average 20:} & 4753.86 & \bf785.49 & 7.00\% &\bf 0.00\% & 8941154 & \bf2174  & 832   &\bf 220 \\\hline
\multirow{8}{*}{30} & \multirow{4}{*}{2} &  W    &\bf 52.13 & 1439.90 & 0.00\% & 0.00\% & 60401 & \bf11    & \bf9     & 109 \\
      &       &  K    & \bf52.77 & 1440.09 & 0.00\% & 0.00\% & 60401 &\bf 11    &\bf 9     & 109 \\
      &       &  D    & \bf57.57 & 2410.21 & 0.00\% & 0.00\% & 62889 &\bf 7     &\bf 9     & 107 \\
      &       &  C    &\bf 0.17  & 18000.00 & \bf0.00\% & 25.73\% & \bf40    & 2833  &\bf 3     & 4033 \\\cline{2-11}
      & \multirow{4}{*}{5} &  W    & 18000.00 & \bf654.68 & 87.13\% & \bf0.00\% & 22547601 & \bf109   & 7194  &\bf 47 \\
      &       &  K    & 18000.00 & \bf653.84 & 87.11\% &\bf 0.00\% & 22459376 & \bf109   & 7161  & \bf47 \\
      &       &  D    & 18000.00 &\bf 242.66 & 83.94\% & \bf0.00\% & 22252049 & \bf25    & 7240  & \bf41 \\
      &       &  C    & \bf349.26 & 11310.31 &\bf 0.00\% & 28.76\% & 503141 & \bf7537  & \bf89    & 1318 \\\hline
\multicolumn{3}{|c}{\bf Average 30:} & 6814.04 & \bf4518.97 & 32.27\% &\bf 6.81\% & 8493237 & \bf1330  & 2714  & \bf726 \\\hline
\multirow{8}{*}{40} & \multirow{4}{*}{2} &  W    & \bf1870.93 & 18000.00 &\bf 0.00\% & 11.71\% & 1453146 & \bf1     & \bf91    & 251 \\
      &       &  K    & \bf1923.60 & 18000.00 & \bf0.00\% & 11.73\% & 1453146 &\bf 1     & \bf91    & 248 \\
      &       &  D    & \bf1765.95 & 18000.00 &\bf 0.00\% & 10.56\% & 1290038 &\bf 1     & \bf87    & 244 \\
      &       &  C    & \bf0.26  & 17809.35 &\bf 0.00\% & 22.50\% &\bf 81    & 408   &\bf 4     & 1264 \\\cline{2-11}
      & \multirow{4}{*}{5} &  W    & 18000.00 & \bf15077.97 & 99.96\% & \bf1.88\% & 15197462 &\bf 280   & 9801  & \bf141 \\
      &       &  K    & 18000.00 & \bf15029.85 & 99.96\% & \bf1.77\% & 15210786 & \bf281   & 9792  & \bf142 \\
      &       &  D    & 18000.00 & \bf3346.72 & 99.83\% & \bf0.00\% & 14810951 &\bf 864   & 9700  & \bf183 \\
      &       &  C    & \bf982.10 & 12375.95 &\bf 0.00\% & 18.17\% & 1196810 & \bf1358  & \bf205   & 1559 \\\hline
\multicolumn{3}{|c}{\bf Average 40:} &\bf 7567.91 & 14705.75 & 37.47\% & \bf9.79\% & 6326552 & \bf399   & 3722  & \bf504 \\\hline
\multirow{8}{*}{45} & \multirow{4}{*}{2} &  W    & \bf10238.75 & 18000.00 & \bf0.00\% & 27.97\% & 6492828 & \bf1     & \bf219   & 351 \\
      &       &  K    & 10438.84 & \bf9514.96 &\bf 0.00\% & 5.05\% & 6532368 &\bf 9     &\bf 208   & 401 \\
      &       &  D    &\bf 10192.16 & 18000.00 & \bf0.00\% & 45.37\% & 6066819 &\bf 1     & \bf217   & 336 \\
      &       &  C    &\bf 0.35  & 14541.26 &\bf 0.00\% & 7.58\% & \bf133   & 2286  &\bf 5     & 932 \\\cline{2-11}
      & \multirow{4}{*}{5} &  W    & 18000.00 & 18000.00 & 99.86\% & \bf29.67\% & 12121664 & \bf32    & 10427 & \bf139 \\
      &       &  K    & 18000.00 & 18000.00 & 100.00\% &\bf 47.44\% & 12720196 & \bf25    & 11047 &\bf 142 \\
      &       &  D    & 18000.00 & \bf7525.10 & 99.46\% &\bf 0.01\% & 12193404 & \bf2383  & 9076  & \bf336 \\
      &       &  C    & \bf1268.05 & 17486.12 & \bf0.00\% & 36.94\% & 1570195 &\bf 1490  & \bf244   & 2245 \\\hline
\multicolumn{3}{|c}{\bf Average 45:} & \bf10767.32 & 15133.56 & 37.41\% & \bf25.00\% & 7212201 &\bf 778   & 3930  & \bf610 \\\hline
\multirow{8}{*}{50} & \multirow{4}{*}{2} &  W    & 18000.00 & 18000.00 & 22.46\% &\bf 3.48\% & 9401360 & \bf1     & 1593  & \bf582 \\  
      &       &  K    & 18000.00 & 18000.00 & 22.57\% & \bf3.48\% & 9275884 & \bf1     & 1584  & \bf583 \\
      &       &  D    & 18000.00 & 18000.00 & 21.06\% & \bf2.84\% & 8902849 & \bf1     & 1238  & \bf515 \\
      &       &  C    &\bf 0.29  & 18000.00 & \bf0.00\% & 19.79\% & \bf37    & 372   &\bf 6     & 923 \\\cline{2-11}
      & \multirow{4}{*}{5} &  W    & 18000.00 & 18000.00 & 100.00\% & \bf52.33\% & 10760962 & \bf1     & 11353 & \bf127 \\
      &       &  K    & 18000.00& 18000.00 & 100.00\% & \bf52.33\% & 10743398 &\bf 1     & 11335 & \bf126 \\
      &       &  D    & 18000.00 & 18000.00 & 100.00\% & \bf46.21\% & 9732814 &\bf 1     & 9864  & \bf134 \\
      &       &  C    &\bf 1778.36 & 18000.00 & \bf0.00\% & 43.86\% & 2234747 &\bf 470   &\bf 281   & 1432 \\\hline
\multicolumn{3}{|c}{\bf Average 50:} &\bf 13722.40 & 18000.00 & 45.76\% & \bf28.04\% & 7631506 &\bf 106   & 4657  & \bf553 \\\hline
\multicolumn{3}{|c}{\bf Total Average:} &\bf 7773.24 & 9224.75 & 29.36\% & \bf11.24\% & 7737963 &\bf 1120  & 2888  & \bf517 \\\hline
\end{tabular}%
   \caption{Results for \cite{EWC74} instances for vertical distance. \label{tab:EilonV}}}
\end{table}%

 \begin{table}
  \centering{\small\begin{tabular}{|c|c|c||rr|rr|rr|rr|}\cline{4-11}
    \multicolumn{3}{c||}{} & \multicolumn{2}{c|}{\texttt{Time} (secs.)} &  \multicolumn{2}{c|}{\texttt{GAP}} &  \multicolumn{2}{c|}{\texttt{Nodes}}  &  \multicolumn{2}{c|}{\texttt{Memory} (MB)} \\\hline
   $n$ &$p$ & \texttt{type} & Compact & B\&P &  Compact & B\&P & Compact & B\&P &Compact & B\&P\\\hline
\multirow{8}{*}{20} & \multirow{4}{*}{2} &  W    & 166.73 & \bf136.75 & 0.00\% & 0.00\% & 163750 & \bf21    & \bf17    & 30 \\
      &       &  K    & 167.49 & \bf136.65 & 0.00\% & 0.00\% & 163750 & \bf21    & \bf17    & 30 \\
      &       &  D    & 624.85 & \bf103.66 & 0.00\% & 0.00\% & 690721 & \bf26    & 40    & \bf30 \\
      &       &  C    & \bf0.98  & 10126.55 &\bf 0.00\% & 5.13\% & \bf1398  & 3597  & \bf3     & 2141 \\\cline{2-11}
      & \multirow{4}{*}{5} &  W    & 18000.00 & \bf111.13 & 100.00\% &\bf 0.00\% & 29829455 &\bf 32    & 10437 & \bf14 \\
      &       &  K    & 18000.00 &\bf 109.85 & 100.00\% &\bf 0.00\% & 30416596 & \bf32    & 10655 & \bf14 \\
      &       &  D    & 18000.00 &\bf 56.87 & 100.00\% &\bf 0.00\% & 31528234 & \bf15    & 10624 & \bf13 \\
      &       &  C    & 18000.00 &\bf 15315.35 & 100.00\% & \bf12.23\% & 40775405 & \bf18269 & 7513  & \bf2118 \\\hline
\multicolumn{3}{|c}{\bf Average 20:} & 9120.10 & \bf3262.10 & 50.00\% & \bf2.17\% & 16696164 & \bf2752  & 4913  &\bf 549 \\\hline
\multirow{8}{*}{30} & \multirow{4}{*}{2} &  W    & 13046.35 & \bf4509.86 & 28.75\% &\bf 0.00\% & 13086135 & \bf26    & 1187  &\bf 123 \\
      &       &  K    & 13034.52 & \bf4507.75 & 28.74\% &\bf 0.00\% & 13098248 & \bf26    & 1188  & \bf123 \\
      &       &  D    & 11959.18 &\bf 4595.89 & 26.93\% &\bf 0.01\% & 13057250 &\bf 27    & 1724  &\bf 127 \\
      &       &  C    & \bf2.92  & 12061.62 & \bf0.00\% & 19.61\% & 1192  & \bf507   & \bf4     & 2154 \\\cline{2-11}
      & \multirow{4}{*}{5} &  W    & 18000.00 &\bf 947.24 & 98.89\% &\bf 0.00\% & 20504433 & \bf35    & 10616 &\bf 39 \\
      &       &  K    & 18000.00 & \bf927.14 & 98.88\% &\bf 0.00\% & 21254042 &\bf 35    & 11006 & \bf39 \\
      &       &  D    & 18000.00 & \bf1885.54 & 100.00\% & \bf0.00\% & 20197549 &\bf 140   & 10972 & \bf49 \\
      &       &  C    & \bf14811.15 & 18000.00 & 80.00\% &\bf 46.40\% & 25951648 & \bf3028  & 8158  & \bf3031 \\\hline
\multicolumn{3}{|c}{\bf Average 30:} & 13356.84 & \bf5929.39 & 57.77\% &\bf 8.25\% & 15893812 & \bf478   & 5607  & \bf711 \\\hline
\multirow{8}{*}{40} & \multirow{4}{*}{2} &  W    & 18000.00 & 18000.00 & 42.82\% &\bf 6.40\% & 13047861 & \bf1     & 2218  & \bf201 \\
      &       &  K    & 18000.00 & 18000.00 & 42.95\% &\bf 6.40\% & 12998370 & \bf1     & 2214  & \bf201 \\
      &       &  D    & 18000.00 & 18000.00 & 65.74\% & \bf7.03\% & 10642421 & \bf1     & 1809  & \bf213 \\
      &       &  C    & \bf2.64  & 17184.72 &\bf 0.00\% & 39.56\% & 3792  & \bf124   & \bf5     & 1593 \\\cline{2-11}
      & \multirow{4}{*}{5} &  W    & 18000.00 & 18000.00 & 100.00\% & \bf39.89\% & 14778541 &\bf 49    & 10698 & \bf98 \\
      &       &  K    & 18000.00 & 18000.00 & 100.00\% & \bf39.20\% & 15280145 & \bf59    & 11070 & \bf101 \\
      &       &  D    & 18000.00 & 18000.00 & 100.00\% &\bf 35.47\% & 14043320 & \bf111   & 9495  & \bf145 \\
      &       &  C    & 18000.00 & 18000.00 & 100.00\% &\bf 60.84\% & 23716675 & \bf299   & 11125 & \bf1002 \\\hline
\multicolumn{3}{|c}{\bf Average 40:} & \bf15750.45 & 17900.37 & 68.94\% &\bf 29.35\% & 13063891 & \bf81    & 6079  & \bf444 \\\hline
\multirow{8}{*}{45} & \multirow{4}{*}{2} &  W    & 18000.00 & 18000.00 & 42.39\% & \bf4.85\% & 10512338 & \bf1     & 2795  & \bf287 \\
      &       &  K    & 18000.00& 18000.00 & 49.79\% &\bf 25.56\% & 10903011 & \bf1     & 2767  & \bf299 \\
      &       &  D    & 18000.00 & 18000.00 & 62.64\% & \bf24.59\% & 8683785 & \bf1     & 2263  & \bf296 \\
      &       &  C    & \bf2.13  & 18000.00 &\bf 0.00\% & 41.79\% & 2251  & \bf37    & \bf6     & 1036 \\\cline{2-11}
      & \multirow{4}{*}{5} &  W    & 18000.00 & 18000.00 & 100.00\% & \bf49.90\% & 11757058 &\bf 2     & 10826 &\bf 97 \\
      &       &  K    & 18000.00 & 18000.00 & 100.00\% & \bf55.13\% & 12704430 & \bf1     & 11911 &\bf 98 \\
      &       &  D    & 18000.00& 18000.00 & 100.00\% &\bf 48.34\% & 11591052 &\bf 3     & 9553  & \bf95 \\
      &       &  C    & 18000.00 & 18000.00 & 100.00\% &\bf 61.79\% & 22769758 & \bf133   & 11064 & \bf728 \\\hline
\multicolumn{3}{|c}{\bf Average 45:} & \bf15750.39 & 18000.00 & 69.35\% & \bf38.99\% & 11115460 & \bf22    & 6398  & \bf367 \\\hline
\multirow{8}{*}{50} & \multirow{4}{*}{2} &  W    & 18000.00 & 18000.00 & 96.53\% & \bf8.22\% & 7215656 & \bf1     & 3368  & \bf371 \\
      &       &  K    & 18000.00 & 18000.00 & 96.51\% &\bf 8.22\% & 7288856 &\bf 1     & 3402  & \bf371 \\
      &       &  D    & 18000.00 & 18000.00 & 96.34\% &\bf 8.21\% & 6792290 &\bf 1     & 2722  & \bf338 \\
      &       &  C    &\bf 4.21  & 18000.00 &\bf 0.00\% & 47.22\% & 5241  & \bf15    & \bf8     & 680 \\\cline{2-11}
      & \multirow{4}{*}{5} &  W    & 18000.00 & 18000.00 & 100.00\% & \bf53.68\% & 11063050 &\bf 1     & 9171  & \bf109 \\
      &       &  K    & 18000.00 & 18000.00 & 100.00\% & \bf53.68\% & 11115826 & \bf1     & 9212  &\bf 109 \\
      &       &  D    & 18000.00 & 18000.00 & 100.00\% &\bf 55.80\% & 11147925 & \bf1     & 9840  & \bf118 \\
      &       &  C    & 18000.00 & 18000.00 & 100.00\% & \bf63.66\% & 19632691 & \bf52    & 9468  & \bf488 \\\hline
\multicolumn{3}{|c}{\bf Average 50:} &\bf 15750.64 & 18000.00 & 86.17\% & \bf37.34\% & 9282692 &\bf 9     & 5899  & \bf323 \\\hline
\multicolumn{3}{|c}{\bf Total Average:} & 13601.88 & \bf11593.62 & 62.69\% & \bf20.53\% & 13958539 & \bf794   & 5756  & \bf508 \\\hline
\end{tabular}%

   \caption{Results for \cite{EWC74} instances for $\ell_1$-distance. \label{tab:EilonL1}}}
\end{table}%

\subsection{Synthetic Instances}

We have also randomly generated another set of instances to evaluate the performance of the two solution approaches depending on the space dimension ($d$). We have generated five instances of random points in the unit hypercube for each meaningful combination of $n\in \{20,30,40,45,50\}$, $p\in \{2,5,10\}$ and $d\in\{2,3,8\}$ (note that for these datasets, we have included additionally $p=10$ to analyze how increasing the number of hyperplanes affects the complexity for larger space dimension ($d=8$)). At this point, it is important to point out that several combinations of the above factors result in trivial problems, for instance for $n=20$ and $p=10$ there is always a solution passing through all the points and thus with zero objective value. All those cases that give rise to trivial solutions are not reported.
Table \ref{tab:SynthV} reports the results for the models with vertical distance residuals while Table \ref{tab:SynthL1} provides the results for the $\ell_1$-norm residuals. We report the same information as the one provided in the previous section but this time the results do not distinguish the type of objective function but the dimension of the space. (Needless to say that all the results disaggregated also by type are available upon request.)

For this dataset the results reinforce our previous observations in that  for problems with vertical distances (see Table \ref{tab:SynthV}), \ref{FHBEP} is much weaker than B\&P for $p=5,10$ and  in any dimension. In this case, however as seen in Table \ref{tab:SynthV} there are cases where for $p=2$ \ref{FHBEP} (see column \textit{Compact}) is more efficient. Turning to problems with $\ell_1$-norm residuals the performance is more homogeneous and B\&P is more efficient than \ref{FHBEP} for all $n$, $p$ and $d$. Once again, one observes that problems with $\ell_1$-norm residuals are more difficult than with vertical residuals. The overall gaps increase from $51.49\%$ and $29.93\%$ in Table \ref{tab:SynthV}, for \ref{FHBEP} and B\&P, respectively, to $83.78\%$ and $37.41\%$ in Table \ref{tab:SynthL1}.

 \begin{table}
  \centering{\small\begin{tabular}{|c|c|c||rr|rr|rr|rr|}\cline{4-11}
    \multicolumn{3}{c||}{} & \multicolumn{2}{c|}{\texttt{Time} (secs.)} &  \multicolumn{2}{c|}{\texttt{GAP}} &  \multicolumn{2}{c|}{\texttt{Nodes}}  &  \multicolumn{2}{c|}{\texttt{Memory} (MB)} \\\hline
   $n$ &$p$ & $d$ & Compact & B\&P &  Compact & B\&P & Compact & B\&P &Compact & B\&P\\\hline
\multirow{5}{*}{20} & \multirow{3}{*}{2} & {2} & \bf11.00 & 56.48 &0.00\% &0.00\% & 2710  &\bf 139   &  \bf 2    & 49 \\
      &       & {3} & \bf4.39  & 233.22 &0.00\% &0.00\% & 2922  &\bf 417   & \bf3     & 137 \\
      &       & {8} &\bf 30.88 & 1506.93 &0.00\% &0.00\% & 29777 & \bf1530  & \bf3     & 782 \\\cline{2-11}
      & \multirow{2}{*}{5} & {2} & 13017.91 & \bf3930.76 & 46.32\% & \bf1.25\% & 13909807 &\bf 40836 & 2209  & \bf430 \\
      &       & {3} & 18000.00 &\bf 4516.39 & 100.00\% & \bf19.56\% & 19586370 &\bf 8330  & 629   & \bf29 \\\hline
\multicolumn{3}{|c}{\bf Average 20: } & 6212.84 &\bf 2048.76 & 29.26\% & \bf4.16\% & 6706317 & \bf10250 & 569   & \bf285 \\\hline
\multirow{6}{*}{30} & \multirow{3}{*}{2} & {2} &\bf 55.66 & 2879.51 &\bf0.00\% & 1.34\% & 44038 & \bf1039  & \bf7     & 1218 \\
      &       & {3} & \bf60.81 & 8779.12 &\bf0.00\% & 5.77\% & 48533 & \bf1741  & \bf8     & 2737 \\
      &       & {8} & \bf414.28 & 18000.00 &\bf0.00\% & 67.19\% & 270055 &\bf 779   &   \bf23   & 1516 \\\cline{2-11}
      & \multirow{2}{*}{5} & {2} & 13933.75 &\bf 5046.83 & 74.14\% & \bf11.01\% & 11575613 &\bf 7989  & 4058  &\bf 1274 \\
      &       & {3} & 18000.00 & \bf12362.06 & 100.00\% & \bf18.32\% & 15098323 & \bf5033  & 3187  & \bf384 \\ \cline{2-11}
      & {10} & {2} & 18000.00 &\bf 4523.03 & 100.00\% & \bf11.05\% & 15536270 &\bf 10046 & 1572  & \bf298 \\\hline
\multicolumn{3}{|c}{\bf Average 30: } &\bf 8410.77 & 8598.55 & 45.69\% & \bf19.11\% & 7095472 & \bf4438  & 1476  & \bf1238 \\\hline
\multirow{7}{*}{40} & \multirow{3}{*}{2} & {2} & \bf1490.88 & 17404.04 &\bf0.00\% & 14.85\% & 903463 & \bf805   & \bf56    & 2186 \\
      &       & {3} & \bf1164.19 & 18000.00 &\bf0.00\% & 18.04\% & 726140 & \bf466   & \bf40    & 1579 \\
      &       & {8} &\bf 8455.38 & 18000.00 &\bf0.00\% & 71.44\% & 4005359 &\bf 59    &\bf 140      & 417 \\\cline{2-11}
      & \multirow{2}{*}{5} & {2} & 15809.48 & \bf12850.71 & 75.78\% &\bf 18.52\% & 10566235 &\bf 3642  & 5303  & \bf1187 \\
      &       & {3} & 18000.00 & 18000.00 & 100.00\% & \bf57.67\% & 12378840 & \bf705   & 5061  & \bf412 \\\cline{2-11} \cline{2-11}
      & \multirow{2}{*}{10} & {2} & 18000.00 &\bf 5498.89 & 100.00\% &\bf 19.66\% & 12196952 & \bf1770  & 2179  & \bf159 \\
      &       & {3} & 18000.00 & \bf13721.20 & 100.00\% & \bf43.81\% & 12359358 & \bf584   & 1121  & \bf79 \\\hline
\multicolumn{3}{|c}{\bf Average 40: } & \bf11560.01 & 14784.91 & 53.68\% & \bf34.85\% & 7590907 & \bf1147  & 1986  & \bf860 \\\hline
\multirow{8}{*}{45} & \multirow{3}{*}{2} & {2} &\bf 11045.65 & 18000.00 &\bf 4.56\% & 25.70\% & 5700797 & \bf587   & \bf316   & 2353 \\
      &       & {3} &\bf 8390.99 & 18000.00 &\bf0.00\% & 25.64\% & 4494533 & \bf235   & \bf127   & 1205 \\
      &       & {8} &\bf 13570.97 & 18000.00 & \bf33.32\% & 67.26\% & 5409179 &\bf 11    &     962  &\bf 324 \\\cline{2-11}
      & \multirow{3}{*}{5} & {2} & 16704.69 & \bf16218.37 & 75.04\% &\bf 33.86\% & 9858979 &\bf 2061  & 6104  &\bf 945 \\
      &       & {3} & 18000.00 & 18000.00 & 100.00\% &\bf 63.19\% & 11396830 & \bf353   & 5914  &\bf 344 \\
      &       & {8} & 18000.00 & 18000.00 & 100.00\% & 100.00\% & 11094838 &\bf 1     &   329    & \bf64 \\\cline{2-11}
      & \multirow{2}{*}{10} & {2} & 18000.00 & \bf6512.29 & 100.00\% & \bf20.37\% & 11236807 & \bf889   & 2124  &\bf 111 \\
      &       & {3} & 18000.00 &\bf 9421.82 & 100.00\% & \bf28.47\% & 10911247 &\bf 272   & 1471  &\bf 71 \\\hline
\multicolumn{3}{|c}{\bf Average 45: } & \bf15214.06 & 15307.51 & 64.12\% & \bf45.56\% & 8762901 &\bf 551   & 2168  & \bf677 \\\hline
\multirow{8}{*}{50} & \multirow{3}{*}{2} & {2} & \bf13500.09 & 18000.00 & 18.38\% &\bf 6.95\% & 6135466 &\bf 393   & \bf1132  & 2361 \\
      &       & {3} & \bf13500.51 & 18000.00 & \bf20.60\% & 30.71\% & 6182966 & \bf112   & \bf989   & 1209 \\
      &       & {8} & \bf13568.18 & 18000.00 & \bf45.65\% & 67.16\% & 4649063 & \bf2     &    1243   & \bf407 \\\cline{2-11}
      & \multirow{3}{*}{5} & {2} & \bf16593.29 & 18000.00 & 75.02\% & \bf51.32\% & 8984044 &\bf 801   & 7563  & \bf729 \\
      &       & {3} & 18000.00 & 18000.00 & 100.00\% & \bf65.74\% & 10380563 & \bf157   & 5969  &\bf 268 \\
      &       & {8} & 18000.00 & 18000.00 & 100.00\% & 100.00\% & 10407264 & \bf1     & 1472  &\bf 83 \\\cline{2-11}
      & \multirow{2}{*}{10} & {2} & 18000.00 &\bf 9490.74 & 100.00\% & \bf20.01\% & 10892069 & \bf363   & 2483  &\bf 83 \\
      &       & {3} & 18000.00 & 18000.00 & 100.00\% & \bf68.93\% & 10139295 & \bf214   & 1537  &\bf 75 \\\hline
\multicolumn{3}{|c}{\bf Average 50: } &\bf 16145.28 & 16946.05 & 69.96\% &\bf 51.35\% & 8471341 &\bf 255   & 2791  & \bf652 \\\hline
\multicolumn{3}{|c}{\bf Total Average: } &\bf11231.69&	11409.54	&51.49\%	&\bf29.93\%	&7735135 &	\bf3287&	1718	&\bf773\\\hline
\end{tabular}}
     \caption{Results for synthetic instances for vertical distance. \label{tab:SynthV}}%
\end{table}

 \begin{table}
  \centering{\small\begin{tabular}{|c|c|c||rr|rr|rr|rr|}\cline{4-11}
    \multicolumn{3}{c||}{} & \multicolumn{2}{c|}{\texttt{Time} (secs.)} &  \multicolumn{2}{c|}{\texttt{GAP}} &  \multicolumn{2}{c|}{\texttt{Nodes}}  &  \multicolumn{2}{c|}{\texttt{Memory} (MB)} \\\hline
   $n$ &$p$ & $d$ & Compact & B\&P &  Compact & B\&P & Compact & B\&P &Compact & B\&P\\\hline
\multirow{5}{*}{20} & \multirow{3}{*}{2} & {2} & 2799.81 & \bf1413.43 & \bf0.03\% & 1.50\% & 4193628 & \bf545   & \bf216   & 424 \\
      &       & {3} & 10649.52 & \bf4226.37 & 8.41\% &\bf 0.01\% & 17136729 & \bf617   & 696   & \bf693 \\
      &       & {8} & 18000.00 &\bf 14942.66 & 100.00\% & \bf36.68\% & 32145208 & \bf324   & 488   & \bf476 \\\cline{2-11}
      & \multirow{2}{*}{5} & {2} & 17977.28 & \bf3686.10 & 100.00\% &\bf 4.15\% & 35717983 & \bf5312  & 9878  &\bf 922 \\
      &       & {3} & 18000.00 &\bf 4610.48 & 100.00\% & \bf16.99\% & 39208660 &\bf 3325  & 5832  &\bf 52 \\\hline
\multicolumn{3}{|c}{\bf Average 20:} & 13485.35 & \bf5775.81 & 61.69\% & \bf11.87\% & 25680442 & \bf2025  & 3422  & \bf514 \\\hline
\multirow{6}{*}{30} & \multirow{3}{*}{2} & {2} & 11021.12 & \bf10263.53 & 43.06\% &\bf 4.35\% & 11908005 &\bf 293   & 2283  &\bf 1230 \\
      &       & {3} & \bf13503.52 & 15061.10 & 50.87\% & \bf18.57\% & 13408414 & \bf178   & 2644  &\bf 894 \\
      &       & {8} & 18000.00 & 18000.00 & 100.00\% & \bf85.77\% & 18682135 & \bf14    & 3855  & \bf171 \\\cline{2-11}
      & \multirow{2}{*}{5} & {2} & 18000.00 & \bf9923.88 & 100.00\% & \bf14.72\% & 24028581 &\bf 1361  & 11038 & \bf1765 \\
      &       & {3} & 18000.00 &\bf 12882.81 & 100.00\% &\bf 22.45\% & 23654639 & \bf631   & 9211  & \bf785 \\\cline{2-11}
      & \multicolumn{1}{c|}{10} & {2} & 17998.01 &\bf 4745.50 & 100.00\% & \bf14.56\% & 24603724 &\bf 1162  & 8567  & \bf169 \\\hline
\multicolumn{3}{|c}{\bf Average 30:} & 16087.22 & \bf11815.89 & 82.32\% &\bf 26.74\% & 19380916 &\bf 607   & 6266  &\bf 835 \\\hline
\multirow{7}{*}{40} & \multirow{3}{*}{2} & {2} & \bf13500.52 & 17798.69 & 63.24\% & \bf14.27\% & 8495764 &\bf 76    & 2259  & \bf1074 \\
      &       & {3} &\bf 13509.64 & 18000.00 & 64.33\% & \bf31.75\% & 7710536 & \bf25    & 2929  &\bf 601 \\
      &       & {8} & 18000.00 & 18000.00 & 100.00\% & \bf73.86\% & 12839616 & \bf2     & 4272  & \bf192 \\\cline{2-11}
      & \multirow{2}{*}{5} & {2} & 18000.00 & 18000.00 & 100.00\% & \bf42.62\% & 17696588 & \bf193   & 10524 & \bf904 \\
      &       & {3} & 18000.00 & 18000.00 & 100.00\% & \bf69.83\% & 19758253 &\bf 76    & 9542  & \bf378 \\\cline{2-11}
      & \multirow{2}{*}{10} & {2} & 17688.37 & \bf10787.01 & 100.00\% & \bf22.17\% & 17749077 & \bf212   & 8306  & \bf45 \\
      &       & {3} & 18000.00 & 18000.00 & 100.00\% & \bf63.85\% & 16865456 & \bf111   & 5522  &\bf 44 \\\hline
\multicolumn{3}{|c}{\bf Average 40:} & \bf16671.31 & 16943.82 & 89.65\% & \bf45.48\% & 14445041 &\bf 99    & 6193  & \bf463 \\\hline
\multirow{8}{*}{45} & \multirow{3}{*}{2} & {2} & \bf13500.65 & 17927.36 & 61.73\% & \bf16.31\% & 7295436 & \bf26    & 2189  &\bf 1101 \\
      &       & {3} & \bf13507.23 & 18000.00 & 74.40\% & \bf25.74\% & 5456543 &\bf 10    & 2359  & \bf602 \\
      &       & {8} & 18000.00 & 18000.00 & 100.00\% & \bf69.62\% & 10392241 & \bf1     & 4242  &\bf 243 \\\cline{2-11}
      & \multirow{3}{*}{5} & {2} & 18000.00 &\bf 16684.12 & 100.00\% & \bf45.50\% & 13916126 &\bf 714   & 9640  &\bf 729 \\
      &       & {3} & 18000.00 & 18000.00 & 100.00\% &\bf 68.74\% & 15523670 & \bf31    & 9216  & \bf294 \\
      &       & {8} & 18000.00 & 18000.00 & 100.00\% & 100.00\% & 13168789 &\bf 1     & 1284  &\bf 57 \\\cline{2-11}
      & \multirow{2}{*}{10} & {2} & 18000.00 & \bf13383.58 & 100.00\% & \bf24.11\% & 14582103 &\bf 234   & 8573  & \bf61 \\
      &       & {3} & 18000.00 & \bf16757.30 & 100.00\% & \bf60.65\% & 14067756 & \bf54    & 6130  &\bf 59 \\\hline
\multicolumn{3}{|c}{\bf Average 45:} & \bf16876.16 & 17096.84 & 92.02\% & \bf51.33\% & 11800333 & \bf134   & 5454  &\bf 393 \\\hline
\multirow{8}{*}{50} & \multirow{3}{*}{2} & {2} & \bf13500.40 & 15300.83 & 71.34\% &\bf 10.99\% & 6941692 &\bf 11    & 3250  &\bf 950 \\
      &       & {3} & \bf13512.06 & 18000.00 & 60.70\% &\bf 13.84\% & 5691897 &\bf 5     & 1877  &\bf 1068 \\
      &       & {8} & 18000.00  & 18000.00 & 100.00\% &\bf 64.78\% & 9864048 &\bf 1     & 3915  &\bf 345 \\\cline{2-11}
      & \multirow{3}{*}{5} & {2} & 18000.00 & \bf15543.12 & 100.00\% &\bf 46.58\% & 15611084 & \bf492   & 10326 & \bf704 \\
      &       & {3} & 18000.00 & 18000.00 & 100.00\% & \bf85.37\% & 14771026 &\bf 9     & 8255  & \bf199 \\
      &       & {8} & 18000.00 & 18000.00 & 100.00\% & 100.00\% & 10436698 &\bf 1     & 3278  &\bf 69 \\\cline{2-11}
      & \multirow{2}{*}{10} & {2} & 18000.00 & \bf15309.03 & 100.00\% & \bf29.85\% & 14012908 &\bf 273   & 6934  &\bf 72 \\
      &       & {3} & 18000.00 & 18000.00 & 100.00\% &\bf 67.47\% & 12123118 &\bf 26    & 5274  &\bf 69 \\\hline
\multicolumn{3}{|c}{\bf Average 50:} & \bf16876.66 & 17023.10 & 91.50\% & \bf52.36\% & 11181559 & \bf102   & 5388  & \bf434 \\\hline
\multicolumn{3}{|c}{\bf Total Average:} &   16038.45&	\bf13854.81&83.78\%	&\bf37.41\%	&16590341	&\bf569	 &5435	&\bf531\\\hline
\end{tabular}}
     \caption{Results for synthetic instances for $\ell_1$ distance. \label{tab:SynthL1}}%
\end{table}%

\section{Scalability: Bounding the error in aggregation procedures\label{sec:6}}

This section is devoted to analyze the issue of scalability of our approach. We are aware that the
methodology based on a branch and price algorithm may be computationally costly (we refer the reader
to the Section \ref{sec:comp_result} for further details).  For that reason, we derive an approach that allows one to handle large data sets with appropriate error bounds.

Our approach is based on aggregating data to reduce the dimensionality of the original problem so
that our branch and price approach can properly handle the problem. The important issue is that we
can provide error bounds on these approximations  that monotonically decrease with the quality of the aggregation. Obviously, aggregation strategies are not new since they have been already applied in some other areas although mostly from a heuristic point of view (see e.g., \cite{CurrentSchilling87,CurrentSchilling90})

Let $X=\{x_1,\ldots,x_n\}\subset \mathbb{R}^d$ be a set of demands points. Aggregating $X$ into a new set of demand points $X^\prime$  consists of replacing  $X$ by $X^\prime=\{x_1^\prime, \ldots, x_n^\prime\}$ and to assign each point $x_i$ in $X$ to a point $x^\prime_i$ in $X^\prime$ (since usually the cardinality of the different elements of $X^\prime$ is smaller than the cardinality of $X$, several $x_i$ may be assigned to the same $x_i^\prime$ and thus actually, some of the elements in $X^\prime$ coincide). A possible choice can be substituting the set of original demand points by the centroids obtained by any of the available clustering techniques. In any case, when solving \eqref{mofhp0} for $X^\prime$ instead of using $X$ one incurs in aggregation errors.

Let $\mathbb{H}$ be the optimal arrangement of $p$ hyperplanes for the problem and $\mathbf{e} = (e_1, \ldots, e_n)$ with $e_i = \varepsilon_{x_i}\Big(\mathbb{H}\Big)$, for $i\in I$, the residuals with respect to $\mathbb{H}$. Analogously, let $\mathbb{H}^\prime$ be the optimal arrangement for the demand points in $X^\prime$ and $\mathbf{e}^\prime$ the vector of residuals.

\begin{theo}
Let $T = \dmax_{i=1,\ldots,n}  \d(x_i,x_i^\prime)$. Then, the following relation holds:
\begin{equation}
| \omf ( \mathbf{e}^\prime)- \omf (\mathbf{e})| \le 2  \omf (T, \ldots, T).
\end{equation}
\end{theo}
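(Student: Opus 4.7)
The plan is to combine a Lipschitz stability property for the residual $\varepsilon_y(\mathbb{K})$ in its point argument $y$ with the sublinearity (and monotonicity) of $\omf$ on $\R^n_+$, then telescope through two suboptimal arrangements obtained by swapping $\mathbb{H} \leftrightarrow \mathbb{H}'$ and $X \leftrightarrow X'$. The optimality of $\mathbb{H}$ on $X$ and of $\mathbb{H}'$ on $X'$ is the mechanism that pins down the telescoping.

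First I would establish the Lipschitz ingredient: for any arrangement $\mathbb{K} = \{\mathcal{H}(\bbeta_j,\alpha_j) : j \in J\}$ and any $y,z \in \R^d$,
\[
|\varepsilon_y(\mathbb{K}) - \varepsilon_z(\mathbb{K})| \le \d(y,z).
\]
For each fixed $j$ the map $y \mapsto \d(y,\mathcal{H}(\bbeta_j,\alpha_j))$ is $1$-Lipschitz by the standard triangle-inequality argument for the distance to a convex set, and the $1$-Lipschitz property is preserved by the pointwise minimum over $j \in J$. Specialising to $y = x_i$, $z = x_i'$ and using $\d(x_i,x_i') \le T$ yields $|\varepsilon_{x_i}(\mathbb{K}) - \varepsilon_{x_i'}(\mathbb{K})| \le T$ for every $i \in I$ and every arrangement $\mathbb{K}$.

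Second, I would record that $\omf : \R^n_+ \to \R_+$ is sublinear and monotone. Because $\lambda_1 \ge \cdots \ge \lambda_n \ge 0$, the rearrangement inequality gives
\[
\omf(\mathbf{a}) = \max_\sigma \sum_{i=1}^n \lambda_i a_{\sigma(i)},
\]
the maximum being taken over permutations $\sigma$ of $\{1,\dots,n\}$, so $\omf$ is a maximum of finitely many linear functionals with nonnegative coefficients. Consequently $\omf$ is positively homogeneous, subadditive and monotone on $\R^n_+$; in particular $\omf(\mathbf{a} + T\mathbf{1}) \le \omf(\mathbf{a}) + \omf(T,\dots,T)$.

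The closing step introduces the bridging vectors $\tilde{\mathbf{e}} = (\varepsilon_{x_i'}(\mathbb{H}))_{i \in I}$ and $\tilde{\mathbf{f}} = (\varepsilon_{x_i}(\mathbb{H}'))_{i \in I}$. The Lipschitz step delivers $\tilde{\mathbf{e}} \le \mathbf{e} + T\mathbf{1}$ and $\tilde{\mathbf{f}} \le \mathbf{e}' + T\mathbf{1}$ componentwise, while the optimality of $\mathbb{H}'$ on $X'$ and of $\mathbb{H}$ on $X$ yields $\omf(\mathbf{e}') \le \omf(\tilde{\mathbf{e}})$ and $\omf(\mathbf{e}) \le \omf(\tilde{\mathbf{f}})$. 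Chaining monotonicity with subadditivity,
\[
\omf(\mathbf{e}') \le \omf(\tilde{\mathbf{e}}) \le \omf(\mathbf{e} + T\mathbf{1}) \le \omf(\mathbf{e}) + \omf(T,\dots,T),
\]
and symmetrically $\omf(\mathbf{e}) \le \omf(\mathbf{e}') + \omf(T,\dots,T)$, which implies the stated bound (in fact already with constant $1$ in place of $2$; the factor $2$ in the statement is either a safe slack or arises from a cruder triangle-inequality split through the intermediate vectors rather than from the optimality pivot). The main obstacle is really just the careful justification of the sublinearity of $\omf$ under the sorted-weight convention; once that ingredient is in place, the telescoping through $\tilde{\mathbf{e}}$ and $\tilde{\mathbf{f}}$ is mechanical.
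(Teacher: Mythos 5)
Your proof is correct, and its first half (the triangle/Lipschitz inequality $\varepsilon_{x_i}(\mathbb{H})\le \varepsilon_{x_i'}(\mathbb{H})+\d(x_i,x_i')$, the bridging vector $\tilde{\mathbf{e}}$, and monotonicity plus sublinearity of $\omf$) coincides with the paper's argument; you even supply a justification of sublinearity, via $\omf(\mathbf{a})=\max_\sigma\sum_i\lambda_i a_{\sigma(i)}$, that the paper only asserts. Where you genuinely diverge is the closing step: the paper finishes by invoking Geoffrion's Theorem~5 on objective function approximations as a black box, which is what produces the factor $2$, whereas you close the loop yourself with the second bridging vector $\tilde{\mathbf{f}}=(\varepsilon_{x_i}(\mathbb{H}'))_i$ and the optimality of both $\mathbb{H}$ on $X$ and $\mathbb{H}'$ on $X'$, obtaining a self-contained proof that in fact gives the sharper constant $1$ for the quantity as literally stated (comparison of the two \emph{optimal values} $\omf(\mathbf{e}')$ and $\omf(\mathbf{e})$). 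The one nuance worth keeping in mind is that the paper's appeal to Geoffrion, and its interpretation paragraph after the theorem (``the excess due to the implementation of an approximate solution''), indicate that the quantity really targeted is the implementation error $\omf\bigl((\varepsilon_{x_i}(\mathbb{H}'))_{i\in I}\bigr)-\omf(\mathbf{e})$, i.e.\ your $\omf(\tilde{\mathbf{f}})-\omf(\mathbf{e})$; for that quantity the factor $2$ is not slack, and your own chain delivers exactly it: $\omf(\tilde{\mathbf{f}})\le \omf(\mathbf{e}')+\omf(T,\ldots,T)\le \omf(\tilde{\mathbf{e}})+\omf(T,\ldots,T)\le \omf(\mathbf{e})+2\,\omf(T,\ldots,T)$. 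So your approach buys independence from the external reference and a sharper bound for the optimal-value gap, while the paper's route covers the implementation error directly; both are valid, and your estimates recombine to give the paper's statement in either reading.
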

\begin{proof}
  First of all, observe that, based on the triangular inequality, for any $\mathbb{H}$
$$
\varepsilon_{x_i}(\mathbb{H})  \le \varepsilon_{x_i^\prime}(\mathbb{H})+ \d(x_i,x_i^\prime), \forall i =1, \ldots, n.
$$
Let us also consider the vector $\mathbf{t}=(\d(x_1,x_1^\prime),\ldots, \d(x_n,x_n^\prime))$ of distances from the original points in $X$ to their corresponding points in $X^\prime$ and denote by $\tilde{\mathbf{e}}=(\varepsilon_{x_1^\prime}(\mathbb{H}), \ldots, \varepsilon_{x_n^\prime}(\mathbb{H}))$. Since the function \textsf{OM} is non-decreasing monotone and sublinear, it follows that:
$$
\textsf{OM}_\lambda (\mathbf{e}) \le \textsf{OM}_\lambda (\tilde{\mathbf{e}}+\mathbf{t})   \le \textsf{OM}_\lambda (\tilde{\mathbf{e}}) + \textsf{OM}_\lambda (\mathbf{t}).
$$
Hence, since $T \geq \d(x_i,x_i^\prime)$ for all $i =1, \ldots, n$, we get that:
\begin{equation*}
|\textsf{OM}_\lambda (\mathbf{e}) - \textsf{OM}_\lambda (\tilde{\mathbf{e}})|\le \textsf{OM}_\lambda(T,\ldots,T).
\end{equation*}
From the above inequality we can apply \cite[Theorem 5]{Geo77}  to conclude that
\begin{center}
$|  \textsf{OM}_\lambda ( \mathbf{e}^\prime)- \textsf{OM}_\lambda (\mathbf{e})| \le 2  \textsf{OM}_\lambda (T, \ldots, T).
$
\end{center}
\end{proof}

The difference considered in the above theorem is the excess due to the implementation of an approximate solution based on the reduced model with data set $X^\prime$ rather than the correct optimal solution for the original data in the larger set $X$. This result allows us to scale our CG algorithm to problems of any size using aggregation techniques and providing estimates on the deviation from the optimal value.

We illustrate the application of the above result including the percent error obtained aggregating to 20 points some of our random problems with 50 points by the $20$-mean clustering technique. As one can see in Table \ref{t:scalan2} the  percent errors are small. Observe that in some cases they are even negative, for problems that were not solved to optimality, and where the hyperplanes obtained by  aggregating points, once evaluated on the actual 50 points, produce a smaller error than the upper bound found by the algorithm on the original dataset.

\renewcommand{\arraystretch}{1.2}
\setlength{\tabcolsep}{5pt}
\begin{table}
\centering
\begin{tabular}{|c|c|c|c|}\cline{3-4}
\multicolumn{2}{c}{} & 	\multicolumn{2}{|c|}{\texttt{error}   (\%)}		\\\hline
	$p$ 				& \texttt{type} 	 &$d=2$ & $d=3$\\\hline
	\multirow{4}{*}{2} 	&	W		&	3.48	& 2.78	\\
					&	K		&-17.84	&	-16.52	\\
					&	C		&	4.40	& 	5.90	\\
					&	D		&	3.59	&	2.87	\\\hline
	\multirow{4}{*}{5} 	&	W		&	-0.16	&	1.21	\\
					&	K		&	-9.17	&	-2.99	\\
					&	C		&	6.60	&	20.86	\\
					&	D		&	0.16	& 	-11.16	\\\hline
\end{tabular}
\caption{\% aggregation errors for 50 points problems and vertical distance.\label{t:scalan2}}
\end{table}

\section{Conclusions\label{sec:7}}

This paper considers the problem  of locating a given number of hyperplanes in order to minimize an objective function of the distances from a set of points. Each point is assigned to its closest hyperplane, thus inducing  as many clusters  as the number of fitting hyperplanes. The distance from each point to its corresponding fitting hyperplane can be  seen as a residual and these residuals are aggregated using ordered median functions that are ordered weighted averages representing different types of utilities. Two exact approaches are presented to solve the problem. The first one is based on a compact mixed integer formulation whereas the second one is an extended set partitioning formulation with an exponential number of variables that is handled by a branch-and-price approach. To enhance the performance of this last method we have developed a generator of initial feasible solutions  based on  geometrical properties of the optimal solutions of the hyperplane location problem that we have also derived in this paper, and that are used to initialize the column generation routine of this branch-and-price. We have also presented  a heuristic pricing strategy that is used in combination with the exact one to speed up some pricing iterations. We report the comparison of both method to solve the problem in two different datasets on an extensive battery of computational experiments. The issue of scalability of the exact methods is also analyzed obtaining theoretical upper bounds of the error induced by some aggregated versions of the original dataset.

A possible extension to be developed in a follow up paper is the development of alternative heuristic algorithms capable to solve the problem for large instances. In view of the applications of the proposed methodology in machine learning, other types of tools could be also explored under the multisource ordered median paradigm, as for instance Support Vector Machines, where a first attempt have been already proposed by \cite{BJP19}.

\section*{Acknowledgements}

This research has been partially supported by Spanish Ministry of Econom{\'\i}a and  Competitividad/FEDER grants number MTM2016-74983-C02-01.


\begin{thebibliography}{}

\bibitem[Balas and Padberg, 1976]{BP76}
Balas, E. and Padberg, M.~W. (1976).
\newblock Set partitioning: A survey.
\newblock {\em SIAM review}, 18(4):710--760.

\bibitem[Bertsimas and Shioda, 2007]{BS07}
Bertsimas, D. and Shioda, R. (2007).
\newblock Classification and regression via integer optimization.
\newblock {\em Operations Research}, 55(2):252--271.

\bibitem[Blanco et~al., 2019]{BJP19}
Blanco, V., Jap{\'o}n, A., and Puerto, J. (2019).
\newblock Optimal arrangements of hyperplanes for svm-based multiclass
  classification.
\newblock {\em Advances in Data Analysis and Classification}, pages 1--25.

\bibitem[Blanco et~al., 2014]{BPE14}
Blanco, V., Puerto, J., and El-Haj Ben-Ali, S. (2014).
\newblock Revisiting several problems and algorithms in continuous location
  with $\ell_\tau$-norms.
\newblock {\em Computational Optimization and Applications}, 58(3):563--595.

\bibitem[Blanco et~al., 2016]{BPE16}
Blanco, V., Puerto, J., and El-Haj Ben-Ali, S. (2016).
\newblock Continuous multifacility ordered median location problems.
\newblock {\em European Journal of Operational Research}, 250(1):56--64.

\bibitem[Blanco et~al., 2020]{BPR19}
Blanco, V., Puerto, J., and Rodr{\'\i}guez-Ch{\'\i}a, A.~M. (2020).
\newblock On $\ell_p $-support vector machines and multidimensional kernels.
\newblock {\em Journal of Machine Learning Research}.

\bibitem[Blanco et~al., 2018]{BPS18}
Blanco, V., Puerto, J., and Salmer{\'o}n, R. (2018).
\newblock Locating hyperplanes to fitting set of points: A general framework.
\newblock {\em Computers \& Operations Research}, 95:172--193.

\bibitem[Bradley and Mangasarian, 2000]{BM00}
Bradley, P.~S. and Mangasarian, O.~L. (2000).
\newblock K-plane clustering.
\newblock {\em Journal of Global Optimization}, 16(1):23--32.

\bibitem[Brimberg et~al., 2002]{BJS02}
Brimberg, J., Juel, H., and Sch{\"o}bel, A. (2002).
\newblock Linear facility location in three dimensions---models and solution
  methods.
\newblock {\em Operations Research}, 50(6):1050--1057.

\bibitem[Brimberg et~al., 2003]{BJS03}
Brimberg, J., Juel, H., and Sch{\"o}bel, A. (2003).
\newblock Properties of three-dimensional median line location models.
\newblock {\em Annals of Operations Research}, 122(1-4):71--85.

\bibitem[Carbonneau et~al., 2014]{CCH14}
Carbonneau, R.~A., Caporossi, G., and Hansen, P. (2014).
\newblock Globally optimal clusterwise regression by column generation enhanced
  with heuristics, sequencing and ending subset optimization.
\newblock {\em J. Classif.}, 31(2):219--241.

\bibitem[Cortes and Vapnik, 1995]{Vapnik}
Cortes, C. and Vapnik, V. (1995).
\newblock Support-vector networks.
\newblock {\em Machine learning}, 20(3):273--297.

\bibitem[Current and Schilling, 1987]{CurrentSchilling87}
Current, J.~R. and Schilling, D.~A. (1987).
\newblock Elimination of source a and b errors in p-median location problems.
\newblock {\em Geographical Analysis}, 19(2):95--110.

\bibitem[Current and Schilling, 1990]{CurrentSchilling90}
Current, J.~R. and Schilling, D.~A. (1990).
\newblock Analysis of errors due to demand data aggregation in the set covering
  and maximal covering location problems.
\newblock {\em Geographical Analysis}, 22(2):116--126.

\bibitem[Eilon et~al., 1971]{EWC74}
Eilon, S., Watson-Gandy, C. D.~T., and Christofides, N. (1971).
\newblock {\em Distribution management : mathematical modelling and practical
  analysis}.
\newblock London : Griffin.

\bibitem[Espejo and Rodr{\'\i}guez-Ch{\'\i}a, 2011]{ER11}
Espejo, I. and Rodr{\'\i}guez-Ch{\'\i}a, A.~M. (2011).
\newblock Simultaneous location of a service facility and a rapid transit line.
\newblock {\em Computers \& operations research}, 38(2):525--538.

\bibitem[Gauss, 1809]{Gauss}
Gauss, C.~F. (1809).
\newblock {\em Theoria motus corporum coelestium in sectionibus conicis solem
  ambientium}, volume~7.
\newblock Perthes et Besser.

\bibitem[Geoffrion, 1977]{Geo77}
Geoffrion, A. (1977).
\newblock Objective function approximations in mathematical programming.
\newblock {\em Mathematical Programming}, 13:23--39.

\bibitem[Gitman et~al., 2018]{CLR18}
Gitman, I., Chen, J., Lei, E., and Dubrawski, A. (2018).
\newblock Novel prediction techniques based on clusterwise linear regression.
\newblock {\em arXiv preprint arXiv:1804.10742}.

\bibitem[Hennig, 1999]{H99}
Hennig, C. (1999).
\newblock Models and methods for clusterwise linear regression.
\newblock In {\em Classification in the Information Age}, pages 179--187.
  Springer.

\bibitem[Mangasarian, 1999]{mangasarian}
Mangasarian, O.~L. (1999).
\newblock Arbitrary-norm separating plane.
\newblock {\em Operations Research Letters}, 24(1-2):15--23.

\bibitem[Martini and Sch{\"o}bel, 1998]{MS98}
Martini, H. and Sch{\"o}bel, A. (1998).
\newblock Median hyperplanes in normed spaces---a survey.
\newblock {\em Discrete Applied Mathematics}, 89(1-3):181--195.

\bibitem[Martini and Sch{\"o}bel, 2001]{MS01}
Martini, H. and Sch{\"o}bel, A. (2001).
\newblock Median and center hyperplanes in minkowski spaces---a unified
  approach.
\newblock {\em Discrete Mathematics}, 241(1-3):407--426.

\bibitem[McGee and Carleton, 1970]{McGee-Carleton70}
McGee, V.~E. and Carleton, W.~T. (1970).
\newblock Piecewise regression.
\newblock {\em Journal of the American Statistical Association},
  65(331):1109--1124.

\bibitem[Ogryczak and Tamir, 2003]{OT03}
Ogryczak, W. and Tamir, A. (2003).
\newblock Minimizing the sum of the k largest functions in linear time.
\newblock {\em Information Processing Letters}, 85(3):117--122.

\bibitem[Park et~al., 2017]{PJKW17}
Park, Y.~W., Jiang, Y., Klabjan, D., and Williams, L. (2017).
\newblock Algorithms for generalized clusterwise linear regression.
\newblock {\em INFORMS Journal on Computing}, 29(2):301--317.

\bibitem[Plastria and Carrizosa, 2001]{PC01}
Plastria, F. and Carrizosa, E. (2001).
\newblock Gauge distances and median hyperplanes.
\newblock {\em Journal of Optimization Theory and Applications},
  110(1):173--182.

\bibitem[Quandt, 1958]{Quandt}
Quandt, R.~E. (1958).
\newblock The estimation of the parameters of a linear regression system
  obeying two separate regimes.
\newblock {\em Journal of the american statistical association},
  53(284):873--880.

\bibitem[Ryan and Foster, 1981]{Ryan1981}
Ryan, D.~M. and Foster, A. (1981).
\newblock An integer programming approach to scheduling.
\newblock In Wren, A., editor, {\em Computer Scheduling of Public Transport:
  Urban Passenger Vehicle and Crew Scheduling}, pages 269--280. North-Holland,
  Amsterdan.

\bibitem[Sch{\"o}bel, 1999]{Schobel99}
Sch{\"o}bel, A. (1999).
\newblock {\em Locating lines and hyperplanes: theory and algorithms},
  volume~25.
\newblock Springer Science \& Business Media.

\bibitem[Sch{\"o}bel, 2003]{Schobel03}
Sch{\"o}bel, A. (2003).
\newblock Anchored hyperplane location problems.
\newblock {\em Discrete and Computational Geometry}, 29(2):229--238.

\bibitem[Sch{\"o}bel, 2015]{Schobel15}
Sch{\"o}bel, A. (2015).
\newblock Location of dimensional facilities in a continuous space.
\newblock In {\em Location Science}, pages 135--175. Springer.

\bibitem[Sp{\"a}th, 1982]{spath81}
Sp{\"a}th, H. (1982).
\newblock A fast algorithm for clusterwise linear regression.
\newblock {\em Computing}, 29(2):175--181.

\bibitem[Vapnik, 2013]{Vapnik13}
Vapnik, V. (2013).
\newblock {\em The nature of statistical learning theory}.
\newblock Springer science \& business media.

\bibitem[Weber, 1909]{Weber}
Weber, A. (1909).
\newblock {\em Ueber den standort der industrien}, volume~1.
\newblock Verlag J.C.B.Mohr, T\"ubingen.

\end{thebibliography}
\end{document}